\newtheorem{definition}{Definition}
\newtheorem{theorem}[definition]{Theorem}
\newtheorem{proposition}[definition]{Proposition}
\newtheorem{lemma}[definition]{Lemma}
\newtheorem{remark}[definition]{Remark}
\newtheorem{example}[definition]{Example}
\newtheorem{corollary}[definition]{Corollary}
\newtheorem*{claim*}{Claim}
\newcommand{\0}{\emptyset}
\newcommand{\mc}{\mathcal}
\newcommand{\RR}{\mathbb{R}}
\newcommand{\TT}{\mathbb{T}}
\newcommand{\ZZ}{\mathbb{Z}}
\newcommand{\foralmostall}{\forall^\infty}
\newcommand{\existinfty}{\exists^\infty}
\newcommand{\Ii}{\mc{I}}
\newcommand{\Kk}{\mc{K}}
\newcommand{\Mm}{\mc{M}}
\newcommand{\Nn}{\mc{N}}
\newcommand{\Ff}{\mc{F}}
\newcommand{\Pp}{\mc{P}}
\newcommand{\bez}{\backslash}
\newcommand{\se}{\subseteq}
\newcommand{\sen}{\subsetneq}
\newcommand{\es}{\supseteq}
\newcommand{\nes}{\supsetneq}
\newcommand{\rest}{\hspace{-0.25em}\upharpoonright\hspace{-0.25em}}
\newcommand{\baire}{\omega^\omega}
\newcommand{\cons}{^{\frown}}
\newcommand{\tn}[1]{\textnormal{#1}}
\newcommand{\ti}[1]{\textit{#1}}
\newcommand{\dom}{\textnormal{dom}}
\newcommand{\nwd}{\textnormal{nwd}}
\newcommand{\splitt}{\textnormal{split}}
\newcommand{\wsplit}{\w\textnormal{-split}}
\newcommand{\level}{\textnormal{level}}
\newcommand{\succe}{\textnormal{succ}}
\newcommand{\stem}{\textnormal{stem}}
\def\w{\omega}
\def\baire{\w^\w}
\title{On algebraic sums, trees and ideals in the Baire space}
\author[Ł. Mazurkiewicz]{Łukasz Mazurkiewicz}
\email{lukasz.mazurkiewicz@pwr.edu.pl}
\author[M. Michalski]{Marcin Michalski}
\email{marcin.k.michalski@pwr.edu.pl}
\author[R. Rałowski]{Robert Rałowski}
\email{robert.ralowski@pwr.edu.pl}
\author[Sz. Żeberski]{Szymon Żeberski}
\email{szymon.zeberski@pwr.edu.pl}
\thanks{The work has been partially financed by grant {\bf 8211204601, MPK: 9130730000} from the Faculty of Pure and Applied Mathematics, Wrocław University of Science and Technology.
	\\
	AMS Classification: Primary: 03E75, 28A05, 54H05; Secondary: 03E17
	\\
	Keywords: algebraic sum, Baire space, perfect set, perfect tree, uniformly perfect tree, Silver tree, Miller tree, Laver tree, fake null set, meager set}
\address{Marcin Michalski, Robert Rałowski, Szymon Żeberski, Faculty of Pure and Applied Mathematics, Wrocław University of Science and Technology, 50-370 Wrocław, Poland}
\date{}
\begin{document}

\begin{abstract}
	We work in the Baire space $\ZZ^\w$ equipped with the coordinate-wise addition $+$. Consider a $\sigma-$ideal $\Ii$ and a family $\TT$ of some kind of perfect trees. We are interested in results of the form: for every $A\in \Ii$ and a tree $T\in\TT$ there exists $T'\in \TT, T'\se T$ such that $A+\underbrace{[T']+[T']+\dots +[T']}_{\text{n--times}}\in \Ii$ for each $n\in\w$.
	
  Explored tree types include perfect trees, uniformly perfect trees, Miller trees, Laver trees and $\w-$Silver trees. The latter kind of trees is an analogue of Silver trees from the Cantor space.
   
   Besides the standard $\sigma$-ideal $\Mm$ of meager sets, we also analyze $\Mm_-$ and fake null sets $\Nn$. The latter two are born out of the characterizations of their respective analogues in the Cantor space. The key ingredient in proofs were combinatorial characterizations of these ideals in the Baire space.
\end{abstract}

\maketitle

\section{Introduction and notation}
  We adopt the standard set-theoretical notation (see e.g. \cite{Jech}). Throughout the paper we usually refer to the space $\ZZ^\w$ as the Baire space for its algebraic structure, i.e. coordinate-wise addition $+$ defined by $(x+y)(n)=x(n)+y(n)$ for all $x,y\in \ZZ^\w$ and $n\in\w$.
  
  For $A,B\se \ZZ^\w$ we define the algebraic sum in the standard way
  \[
    A+B=\{a+b:\, a\in A,\; b\in B\}.
  \]
  We will use the same notation for translation via point $x\in\ZZ^\w$, i.e. $x+A=\{x\}+A$, and for addition in $\ZZ^n$ for any $n\in\w$. The context will be always clear and will not lead to confusion.
  
  If in a given context the algebraic structure is not important, we will refer to the canonical Baire space $\w^\w$.
  
  Occasionally we will highlight differences and similarities between the Baire space and the Cantor space. In such cases we treat the Cantor space as ${\ZZ_2}^\w$, also equipped with the coordinate wise addition $+$ (see \cite{MiRalZebAddCant}).
  
  Let us recall some notions regarding trees. Assume that $T\se\ZZ^{<\w}$ is a tree. Then
  \begin{itemize}
    \item $\succe_T(\sigma)=\{i\in\ZZ:\, \sigma\cons i\in T\}$;
    \item $\splitt(T)=\{\sigma\in T:\, |\succe_T(\sigma)|\geq2\}$;
    \item $\wsplit(T)=\{\sigma\in T:\, |\succe_T(\sigma)|=\w\}$.
  \end{itemize}

  \begin{definition}
    We call a tree $T\se \ZZ^{<\w}$
    \begin{itemize}
      \item perfect, if $(\forall\sigma\in T)(\exists\tau\in T)(\sigma\se\tau\land\tau\in\splitt(T))$;
      \item uniformly perfect, if for every $n\in\w$ either $\ZZ^n\cap T\subseteq\splitt(T)$ or $\ZZ^n\cap\splitt(T)=\0$;
      \item Miller, if $(\forall\sigma\in T)(\exists\tau\in T)(\sigma\se\tau\land\tau\in\wsplit(T))$;
      \item Laver, if $(\exists\sigma\in T)(\forall\tau\in T)(\tau\se\sigma\lor(\sigma\se\tau\land\tau\in\wsplit(T)))$;
      \item $\w-$Silver, if there are $A\in [\w]^\w$ and $x_T$ such that 
      \[
        T=\{\sigma\in \ZZ^{<\w}:\, (\forall n\in\dom(\sigma)\bez A)(\sigma(n)=x_T(n))\}.
      \]
    \end{itemize}
  \end{definition}
  Let us remark that the notion of $\w-$Silver seems to be a natural analogue of Silver trees living in the Cantor space that realizes the main feature of the Baire space ($\w-$splitting).
  
  We will denote the set of infinite branches of a tree $T\se \ZZ^{<\w}$ by $[T]$, i.e.
  \[
    [T]=\{x\in \ZZ^\w:\, (\forall n\in\w)(x\rest n\in T )\}.
  \] 
  
  The following remark pinpoints the reason why $\w-$Silver are easier to handle in comparison with other type of trees.
  \begin{remark}
      For every $\w-Silver$ tree $T$ we have $[T]+[T]=[T]+x_T$.
    \end{remark}

  Algebraic sums were mostly studied in the context of the real line with a standard addition. Results related to the ones presented in this paper were also helpful in \cite[Lemma 3]{Rec}, where the author proved that for every null set $A\se \RR$ and every perfect set $P\se \RR$ there exists a perfect set $P'\se P$ such that $A-P$ is null. Analogous result concerned with $+$ and $\sigma-$ideal of meager sets was proved in \cite[Theorem 11]{Scheepers}. Various similar results were also proved in \cite{ErdKuMa}, especially Lemma 9. Algebraic sums in a context of nonmeasurability were studied in \cite{NoScheeWeiss} and \cite{Ky}. Superfluously contradictory results appeared in \cite{MiRalZebNon}, where the authors obtained positive results regarding Miller and Laver trees localized via homeomorphism within irrational numbers in $\RR$.
  
  This paper can be considered a part II of \cite{MiRalZebAddCant}.
		
\section{Meager}

  Let us recall following characterization of meager sets in $2^\omega$ from \cite[Theorem 2.2.4]{BarJu}.
		\begin{lemma}\label{baza meagery}
			Let $F$ be a meager subset of $2^\omega$. There is $x_F\in 2^\omega$ and a partition $\{I_n: n\in\omega\}$ of $\omega$ into intervals such that
			\[
				F\se \{x\in 2^\omega: (\foralmostall n)(x\rest I_n \neq x_F\rest I_n)\}.
			\]
		\end{lemma}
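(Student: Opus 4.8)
The plan is to reduce to a cofinal increasing chain of closed nowhere dense sets and then build the partition by a bare-hands recursion. Since $F$ is meager, I would first fix closed nowhere dense sets $F_0\se F_1\se\dots$ with $F\se\bigcup_n F_n$ (take closures of a witnessing sequence of nowhere dense sets and pass to finite unions to make the chain increasing). Recall that a closed set $C\se 2^\w$ is nowhere dense iff for every $s\in 2^{<\w}$ there is $t\es s$ with $[t]\cap C=\0$; this is the only property of the $F_k$ that will be used.

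Next I would construct recursively a strictly increasing sequence $0=n_0<n_1<\dots$ together with strings $\sigma_k\in 2^{[n_k,n_{k+1})}$ such that
\[
  (\forall\tau\in 2^{n_k})\quad [\tau\cons\sigma_k]\cap F_k=\0 .
\]
Given $n_k$, enumerate $2^{n_k}=\{\tau_0,\dots,\tau_{m-1}\}$ and build $\sigma_k$ as a concatenation $\rho_0\cons\dots\cons\rho_{m-1}$: having chosen $\rho_0,\dots,\rho_{i-1}$, use nowhere density of $F_k$ to pick a nonempty $\rho_i$ so that the cylinder $[\tau_i\cons\rho_0\cons\dots\cons\rho_i]$ misses $F_k$; since passing to a longer initial segment only shrinks cylinders, the conditions already secured for $\tau_j$ with $j<i$ are preserved, and at the end $[\tau_i\cons\sigma_k]\se[\tau_i\cons\rho_0\cons\dots\cons\rho_i]$ is disjoint from $F_k$ for every $i$. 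Insisting that each $\rho_i$ be nonempty guarantees $n_{k+1}>n_k$. Then set $I_k=[n_k,n_{k+1})$, so that $\{I_k:k\in\w\}$ is a partition of $\w$ into intervals, and let $x_F\in 2^\w$ be the unique point with $x_F\rest I_k=\sigma_k$ for all $k$.

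Finally I would verify the inclusion. Fix $x\in F$. As the $F_k$ increase and cover $F$, there is $k_0$ with $x\in F_k$ for all $k\ge k_0$. For such $k$, if we had $x\rest I_k=\sigma_k=x_F\rest I_k$, then putting $\tau=x\rest n_k\in 2^{n_k}$ we would get $x\in[\tau\cons\sigma_k]$, contradicting $[\tau\cons\sigma_k]\cap F_k=\0$. Hence $x\rest I_k\neq x_F\rest I_k$ for every $k\ge k_0$, i.e. $(\foralmostall k)(x\rest I_k\neq x_F\rest I_k)$, which is exactly the desired membership.

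The one genuinely delicate point is the recursion step: manufacturing a single block $\sigma_k$ that simultaneously pushes all $2^{n_k}$ cylinders off $F_k$. This is handled by the ``extend one $\tau_i$ at a time, never undoing previous work'' device above, which exploits that lengthening a node can only shrink its cylinder; everything else in the argument is routine bookkeeping.
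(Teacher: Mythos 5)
Your proof is correct. The paper itself gives no argument for this lemma---it is quoted directly from \cite[Theorem 2.2.4]{BarJu}---and your construction (an increasing chain of closed nowhere dense sets $F_k$, a block $\sigma_k$ built by handling the finitely many nodes of $2^{n_k}$ one at a time so that every $[\tau\cons\sigma_k]$ misses $F_k$, and the diagonal point $x_F$ assembled from the blocks) is exactly the standard proof of that cited result, so the two approaches coincide in all essentials.
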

		We define a family $\Mm_-\se P(\baire)$ in a similar fashion. $A\in\Mm_-$ if there is $x_A\in \omega^\omega$ and a partition $\{I_n: n\in\omega\}$ of $\omega$ into intervals such that
		\[
		  A\se \{x\in \w^\omega: (\foralmostall n)(x\rest I_n \neq x_A\rest I_n)\}.
		\]
		Exploiting the analogy to $\Mm$ in the Cantor space we will denote by $\nwd_-$ the ideal of sets generated by
		\[
		  \{x\in \w^\omega: (\forall n)(x\rest I_n \neq x_A\rest I_n)\}.
		\]
		Notice that $\Mm_-$ is a translation invariant $\sigma$-ideal with the basis of class $F_\sigma$. Also, $\Kk_{\sigma}\sen \Mm_-$ and $\Mm_-\se \Mm$. Moreover, the latter inclusion is proper, i.e. the characterization of $\Mm$ in the Cantor space à la Lemma \ref{baza meagery} fails for $\Mm$ in the Baire space.
	  \begin{theorem}
	    $\Mm\not\se\Mm_-$.
	  \end{theorem}
	  \begin{proof}
	    Let $f:\omega^{<\omega}\to\omega$ be a bijection and consider a tree
	    \[
	      T=\{\sigma\in\omega^{<\omega}:\; (\forall n<|\sigma|)(\sigma(n)\neq f(\sigma\rest n))\}.
	    \]
	    We will show that for any $y\in\w^\w$ and any partition $\{I_n: \;n\in\omega\}$ of $\omega$ into intervals there is $x\in [T]$ such that $x\rest I_n=y\rest I_n$ for infinitely many $n$. So, fix arbitrary $y\in\w^\w$ and a partition of $\omega$ into intervals $\{I_n:\; n\in\omega\}$. Let us start the induction on $n\in\omega$. At the step $0$ denote $I_1=[a_1, b_1]$ and consider a set
	    \begin{align*}
	      F_1=&\{\sigma\in T\cap \w^{a_1}:\; f(\sigma)=y(a_1) \lor f(\sigma\cons y(a_1))=y(a_1+1) \lor 
	      \\
	      &\lor f(\sigma\cons y(a_1)\cons y(a_1+1))=y(a_1+2) \lor\dots \lor f(\sigma\cons y\rest [a_1, b_1))=y(b_1)\}.
	    \end{align*}
	    It is finite (has at most $b_1-a_1+1$ elements), hence there is $\sigma_1\in T\cap \w^{a_1}\bez F_1$. Set $x_1=\sigma_1\cons y\rest I_1$. Clearly, $x_1\in T$. Let us assume that at the step $n+1$ we already have $x_{2n+1}\in T$ such that $x_{2n+1}\rest I_{2k+1}=y\rest I_{2k+1}$ for $k<n+1$. Denote $I_{2n+3}=[a_{2n+3}, b_{2n+3}]$ and consider a set
	    \begin{align*}
	      F_{2n+3}=&\{\sigma\in T\cap \w^{a_{2n+3}}:\; x_{2n+1}\se \sigma \land \big( f(\sigma)=y(a_{2n+3}) \lor 
	      \\
	      &\lor f(\sigma\cons y(a_{2n+3}))=y(a_{2n+3}+1) \lor 
	      \\
	      &\lor f(\sigma\cons y(a_{2n+3})\cons y(a_{2n+3}+1))=y(a_1+2) \lor\dots
	      \\
	      &\dots  \lor f(\sigma\cons y\rest [a_{2n+3}, b_{2n+3}))=y(b_{2n+3})\big)\}.
	    \end{align*}
	    It is finite, hence there is $\sigma_{n+1}\in T\cap \w^{a_{2n+3}}\bez F_{2n+3}$. Set $x_{2n+3}=\sigma\cons y\rest I_{2n+3}$. This finishes the inductive construction. Set $x=\bigcup_{n\in\omega}x_{2n+1}$. Clearly $x$ is the member of $[T]$ we are looking for.
	  \end{proof}
    
    We will rely on the following characterization of $\Mm$ in the Baire space.
    
    \begin{lemma}\label{charakteryzacja M}
      For every meager set $F\se \baire$ there exists $f: \omega^{<\omega}\to \omega^{<\omega}$ such that
      \[
        F\se \{x\in\baire: \, (\foralmostall \sigma\in \omega^{<\omega})(\sigma\cons f(\sigma)\not\se x)\}.
      \]
      Moreover, the set on the right is meager. 
    \end{lemma}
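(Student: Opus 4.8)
The plan is to decompose $F$ into countably many closed nowhere dense pieces and then hard-code into $f$ a uniform strategy for escaping each piece, using $|\sigma|$ as a clock that eventually addresses every piece. So first I would write $F\se\bigcup_{k\in\w}C_k$ with each $C_k$ closed nowhere dense, and, after replacing $C_k$ by $\bigcup_{j\le k}C_j$ if necessary, assume $C_0\se C_1\se\dots$. The one elementary remark that drives everything is that $\sigma\cons f(\sigma)\se x$ can only happen when $\sigma=x\rest|\sigma|$; hence for a fixed $x$ the set $\{\sigma:\sigma\cons f(\sigma)\se x\}$ is automatically contained in $\{x\rest n:n\in\w\}$, so it will suffice to guarantee, for each $x\in F$, that $(x\rest n)\cons f(x\rest n)\not\se x$ for all sufficiently large $n$.

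Next I would build $f$ as follows. For each $k$, since $\baire\bez C_k$ is open and dense, I can choose for every $\sigma\in\w^{<\w}$ a nonempty finite sequence $g_k(\sigma)$ with $[\sigma\cons g_k(\sigma)]\cap C_k=\0$. Then set
\[
  f(\sigma)=g_{|\sigma|}(\sigma).
\]

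To verify the inclusion, fix $x\in F$ and $k_0$ with $x\in C_{k_0}$. For every $n\ge k_0$ we have $x\in C_{k_0}\se C_n$, whereas $[(x\rest n)\cons g_n(x\rest n)]\cap C_n=\0$ by the choice of $g_n$; hence $x\notin[(x\rest n)\cons g_n(x\rest n)]$, that is, $(x\rest n)\cons f(x\rest n)\not\se x$. Together with the remark from the first paragraph this yields $\{\sigma:\sigma\cons f(\sigma)\se x\}\se\{x\rest n:n<k_0\}$, a finite set, so $x$ lies in the right-hand set, as required.

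For the ``moreover'' clause, put $U_N=\bigcup_{n\ge N}\bigcup_{\tau\in\w^n}[\tau\cons f(\tau)]$; then the complement of the right-hand set equals $\bigcap_{N\in\w}U_N$. Each $U_N$ is open, and it is dense, since any nonempty basic clopen set $[s]$ contains $[\tau\cons f(\tau)]$ for $\tau$ any extension of $s$ of length $\max(N,|s|)$. Hence $\bigcap_{N\in\w}U_N$ is comeager and the right-hand set is meager. I do not expect a real obstacle; the only points deserving a moment's care are the scheduling (indexing $g_k$ by $|\sigma|$, so that $x\in F$ is escaped from stage $k_0$ on) and the insistence that each $g_k(\sigma)$ be a genuinely nonempty extension, which is exactly what makes $[\sigma\cons g_k(\sigma)]$ a proper subcylinder missing $C_k$ and hence the escape effective.
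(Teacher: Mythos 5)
Your proof is correct and follows essentially the same route as the paper: decompose $F$ into an increasing union of nowhere dense sets, choose for each piece and each node an extension whose basic clopen set misses that piece, and diagonalize so that every $x\in F$ escapes $\sigma\cons f(\sigma)$ for all but finitely many $\sigma$. The only differences are cosmetic --- you schedule the escape extensions by $|\sigma|$, using the observation that $\sigma\cons f(\sigma)\se x$ forces $\sigma=x\rest |\sigma|$, whereas the paper schedules along an enumeration of $\omega^{<\omega}$, and you additionally verify the meagerness of the right-hand set, which the paper asserts without proof.
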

    \begin{proof}
      Let $F=\bigcup_{n\in\omega}F_n$, where $(F_n:\, n\in\omega)$ is an ascending sequence of nowhere dense sets. For each $n$ there exists $f_n: \omega^{<\omega}\to  \omega^{<\omega}$ such that for every $\sigma$ we have $[\sigma \cons f_n(\sigma)]\cap F_n=\0$. Notice that
      \[
        F_n\se \{x\in\baire:\, (\forall \sigma \in \omega^{<\omega})(\sigma\cons f_n(\sigma)\not\se x)\}.
      \]
      Furthermore we may assume that $f_{n}(\sigma)\se f_{n+1}(\sigma)$ for each $n\in\omega$. Let $\{\sigma_n:\, n\in\omega\}=\omega^{<\omega}$ and set $f(\sigma_n)=f_{n}(\sigma_n)$. The function $f$ is the one we are looking for.
      
      Indeed, let $x\in F$. Then there is $N\in\omega$ such that $x\in F_n$ for $n\ge N$. Then for $n\ge N$  $x\not\es {\sigma_n}\cons f_n(\sigma_n)={\sigma_n}\cons f(\sigma_n)$.
    \end{proof}
	
	  Now we are well prepared for the main results of this section. The following result nips in the bud any considerations concerning Laver trees.
	
	  \begin{proposition}
		There exists a set $A\in \mc{\Mm_{-}}$ such that $A+[T]=\ZZ^\omega$ for each Laver tree $T$. 
	  \end{proposition}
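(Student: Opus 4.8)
The plan is to take for $A$ one of the canonical generators of $\Mm_-$ and then exploit that a Laver tree, once past its stem, $\w$-splits at \emph{every} node, so one can steer a branch of it away from any prescribed point one coordinate at a time.

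Concretely, I would let $x_A$ be the constant zero function and $I_n=\{n\}$, and put
\[
  A:=\{z\in\ZZ^\w:\ (\foralmostall n)(z(n)\neq 0)\}.
\]
This set is in $\Mm_-$ by definition (and it is meager, since its complement $\{z:\ (\existinfty n)(z(n)=0)\}$ is a dense $G_\delta$, which is a useful sanity check against the inclusion $\Mm_-\se\Mm$). It then remains to show $A+[T]=\ZZ^\w$ for an arbitrary Laver tree $T$.

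So fix a Laver tree $T$ (we may assume $[T]\neq\0$) and an arbitrary target $x\in\ZZ^\w$; I must find $b\in[T]$ with $x-b\in A$. First I would pin down a node past which $T$ is totally $\w$-splitting: if $\sigma$ witnesses the Laver property of $T$, then $\succe_T(\sigma)\neq\0$ (since $[T]\neq\0$ and every branch of $T$ extends $\sigma$), so I may fix $j_0\in\succe_T(\sigma)$ and set $\sigma^\ast:=\sigma\cons j_0$; for every $\tau\in T$ with $\tau\es\sigma^\ast$ we have $\tau\not\se\sigma$, whence the Laver condition forces $\tau\in\wsplit(T)$, i.e.\ $|\succe_T(\tau)|=\w$. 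Now I build $b$ recursively above $\sigma^\ast$: put $m:=|\sigma^\ast|$, $\tau_m:=\sigma^\ast$, and given $\tau_k\in T$ of length $k\ge m$ with $\tau_k\es\sigma^\ast$, use $|\succe_T(\tau_k)|=\w$ to pick $j_k\in\succe_T(\tau_k)$ with $j_k\neq x(k)$ and set $\tau_{k+1}:=\tau_k\cons j_k$. Then $b:=\bigcup_{k\ge m}\tau_k\in[T]$ and $b(k)\neq x(k)$ for every $k\ge m$.

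Finally, set $z:=x-b$. By construction $z(k)=x(k)-b(k)\neq 0$ for all $k\ge m$, so $z(n)\neq 0$ for all but finitely many $n$, i.e.\ $z\in A$; and $x=b+z\in[T]+A=A+[T]$. Since $x$ was arbitrary this gives $A+[T]=\ZZ^\w$, and since $T$ was an arbitrary Laver tree we are done. There is no deep obstacle here: the content of the proposition is precisely that a Laver tree splits maximally everywhere above its stem, so a branch can avoid any single coordinate value, which is exactly what membership in this particular $A$ requires. The only points needing a little care are extracting from the terse definition the node $\sigma^\ast$ all of whose extensions $\w$-split (using $[T]\neq\0$), and using $\foralmostall$ rather than $\forall$ in the definition of $A$ — the branch $b$ is forced to agree with the stem on $[0,m)$, and there $x-b$ may well vanish.
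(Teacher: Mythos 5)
Your proof is correct and is essentially the paper's own argument: the authors use the same set $A=\{z:(\foralmostall n)(z(n)\neq 0)\}$ and, for an arbitrary target $z$, build a branch through the Laver tree that follows the stem and then, using $\w$-splitting above it, differs from $z$ at every later coordinate, so that the difference lies in $A$. Your extra care in isolating the node $\sigma^\ast$ above the Laver witness (and the remark about why only $\foralmostall$ is attainable) is just a more explicit rendering of the same construction.
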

	  \begin{proof}
		  Define
		  \[
	  		A=\{x\in\ZZ^\omega: (\foralmostall n)(x(n)\neq 0)\}.
	  	\]
	  	Let $T$ be a Laver tree and let $\sigma_0=\stem(T)$. Let $z\in \ZZ^\omega$. We will find $x\in A$ and $y\in [T]$ satisfying $x+y=z$. Set $y\rest|\sigma_0|=\sigma_0$ and $x(n)=z(n)-y(n)$ for $n<|\sigma_0|$. Then set $\succe_T(y\rest n)\ni y(n)\neq z(n)$ and $x(n)=z(n)-y(n)$ for $n\geq |\sigma_0|$. 
	  \end{proof}		
	  \begin{remark}
		  In the above theorem it is sufficient for considered trees $L$ to satisfy $\sigma\in \splitt(L)$ for each $\sigma\es \stem(L)$.
	  \end{remark}
	  
	  Thanks to the characterization of $\Mm_-$ resembling the one of $\Mm$ in the Cantor space, we have the following two immediate observations.
    
    \begin{theorem}
      For every $F\in \Mm_-$ and every (uniformly) perfect tree $T\se Z^{<\w}$ there is a (uniformly) perfect tree $T'\se T$ such that
      \[
        F+\underbrace{[T']+[T']+\dots +[T']}_{n\ti{--times}}\in \Mm_-.
      \]
    \end{theorem}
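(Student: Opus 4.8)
The plan is to invoke the defining characterization of $\Mm_-$ twice — once to control $F$, once to recognize the algebraic sum as a member of $\Mm_-$ — with a tree‑fusion in between. So fix $x_F$ and a partition $\{I_n:n\in\w\}$ of $\w$ into finite intervals with $F\se M:=\{x:(\foralmostall n)(x\rest I_n\neq x_F\rest I_n)\}$, and note that $M$ remains contained in the analogous set built from any coarsening of $\{I_n\}$, so I am free to pass to coarser partitions.

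The heart of the argument is a fusion producing $T'\se T$ — perfect if $T$ is perfect, uniformly perfect if $T$ is uniformly perfect — together with an infinite set $J\se\w$ and strings $v_j\in\ZZ^{I_j}$ $(j\in J)$ such that every $y\in[T']$ has $y\rest I_j=v_j$; that is, $[T']$ is constant on each block $I_j$, $j\in J$. I would build it by alternating \emph{splitting phases}, in which each node of the current finite front is extended inside $T$ to a splitting (respectively uniformly splitting) node and branched, as in the usual construction of a (uniformly) perfect subtree, with \emph{freezing phases}, in which, at a level that is the left endpoint of some block $I_j$, every node of the current front is extended inside $T$ by one common string across all of $I_j$, this common value is recorded as $v_j$, and $j$ is put into $J$; coarsening $\{I_n\}$ so that the blocks to be frozen are long and widely spaced leaves room for the splitting phases, so $T'$ is genuinely (uniformly) perfect and $J$ is infinite. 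The main obstacle — and the only genuinely delicate point — is the freezing phase: one must guarantee that the finitely many nodes of the front admit a simultaneous continuation inside $T$ over an entire block, which I would secure by having the preceding splitting phase confine the front to a part of $T$ where such a common extension exists (the rigid stretches between consecutive splitting levels make this bookkeeping cleaner in the uniformly perfect case).

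Granting such a $T'$, the rest is routine. Fix $n\in\w$. For any $y_1,\dots,y_n\in[T']$ and any $j\in J$, $(y_1+\dots+y_n)\rest I_j=nv_j$, depending only on $j$ and $n$; hence for $a\in M$ and all but finitely many $j\in J$ we have $a\rest I_j\neq x_F\rest I_j$ and so $(a+y_1+\dots+y_n)\rest I_j\neq x_F\rest I_j+nv_j$. Therefore
\[
  F+\underbrace{[T']+\dots+[T']}_{n}\se M+\underbrace{[T']+\dots+[T']}_{n}\se\{x:(\foralmostall j\in J)(x\rest I_j\neq x_F\rest I_j+nv_j)\},
\]
and the last set lies in $\Mm_-$: enumerating $J=\{j_0<j_1<\dots\}$, set $J_k=[\min I_{j_k},\min I_{j_{k+1}})$, $J_{-1}=[0,\min I_{j_0})$, pick $w$ with $w\rest I_{j_k}=x_F\rest I_{j_k}+nv_{j_k}$, and observe that $x\rest I_{j_k}\neq w\rest I_{j_k}$ forces $x\rest J_k\neq w\rest J_k$, so the displayed set sits inside $\{x:(\foralmostall k)(x\rest J_k\neq w\rest J_k)\}$. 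As $n$ was arbitrary and the fusion keeps $T'$ uniformly perfect whenever $T$ is, the theorem follows in both formulations.
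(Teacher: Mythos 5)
Your reduction of the theorem to a ``freezing'' fusion is exactly where the argument fails: it is not true that every perfect tree $T\se\ZZ^{<\w}$ admits, for every partition $\{I_n:n\in\w\}$ into intervals, a perfect subtree $T'$ all of whose branches agree on infinitely many blocks $I_j$. Let $r(m)$ be the largest $k$ with $2^k$ dividing $m+1$ (the ruler sequence), and let $T$ be the tree of all initial segments of the points $x_y$, $y\in 2^\w$, where $x_y(m)=y(r(m))$; this is a perfect subtree of $\ZZ^{<\w}$, and $y\mapsto x_y$ is injective (indeed $y(k)=x_y(2^k-1)$). Positions $m$ with $r(m)=k$ form an arithmetic progression of difference $2^{k+1}$, so every interval of length at least $2^{k+1}$ contains one; consequently, if all branches of a subtree $S\se T$ agree on a block of length at least $2^{k+1}$, then the code set $P=\{y:x_y\in[S]\}$ is constant at every coordinate $\le k$. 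Now take $F\in\Mm_-$ whose witnessing partition has $|I_n|=2^{n+1}$. If a subtree $T'\se T$ had all branches agreeing on infinitely many $I_j$, its code set would be constant at every coordinate, i.e. $[T']$ would be a single branch, so $T'$ is never perfect. Your escape hatch of ``confining the front to a part of $T$ where a common extension exists'' cannot be implemented: the obstruction survives passage to any perfect subtree, coarsening only lengthens the blocks, and one checks easily that every partition witnessing this particular $F$ must have block lengths tending to infinity, so changing the witness does not help. Note also that your final computation genuinely needs the full freezing: the target $w$ is a single point, so you need $(y_1+\dots+y_n)\rest I_j$ to be independent of the choice of branches, and varying one summand while fixing the others forces each individual branch restriction to be constant on $I_j$.

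The workable version of your idea (and, as far as the cited Cantor-space argument of \cite{MiRalZebAddCant} indicates, the intended one) freezes \emph{locally}, not globally. At each stage of the fusion the front is finite; assign to each node of the front (for the $n$-fold sum, to each multiset of front nodes of size $n$) its \emph{own} original block, during which only the part of the tree below the relevant nodes is forbidden to split, while everything else may be extended freely, and split the whole front simultaneously between these stretches to keep $T'$ (uniformly) perfect. Then pass to a coarser partition whose $k$-th block $P_k$ contains all the sub-blocks assigned at stage $k$, and define the new center $z$ on the sub-block assigned to a given (multiset of) front node(s) as $x_F$ plus the locally frozen sum there. Any $x$ in the set $M$ and any branches $t_1,\dots,t_n\in[T']$ then disagree with $z$ on the sub-block assigned to the front nodes the $t_i$ pass through at stage $k$ (for all large $k$), hence on all of $P_k$; a single differing sub-block spoils equality on the whole coarse block, which is all the definition of $\Mm_-$ asks for. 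This per-node freezing is always possible because one never requires continuations of distinct front nodes to coincide, which is precisely the demand your construction cannot meet.
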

    \begin{proof}
      Almost identical to the proof of \cite[Theorem 6]{MiRalZebAddCant}.
    \end{proof}
    
    \begin{theorem}
      For every $F\in \Mm_-$ and every $\w-$Silver tree $T\se Z^{<\w}$ there is a $\w-$Silver tree $T'\se T$ such that
      \[
        F+\underbrace{[T']+[T']+\dots +[T']}_{n\ti{--times}}\in \Mm_-.
      \]
    \end{theorem}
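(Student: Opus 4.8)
The plan is to strip off the $n$-fold sum by hand using the Remark on $\omega$-Silver trees, and then to mimic, in the Baire setting, the interval-reshuffling argument behind the Silver-tree case of \cite{MiRalZebAddCant}. For an $\omega$-Silver tree $T'$ with parameter $x_{T'}$, the Remark gives $[T']+[T']=[T']+x_{T'}$, and an immediate induction yields $\underbrace{[T']+\dots+[T']}_{n}=[T']+(n-1)x_{T'}$ for every $n\geq1$; hence $F+\underbrace{[T']+\dots+[T']}_{n}$ is a translate of $F+[T']$, and since $\Mm_-$ is translation invariant it is enough to produce one $\omega$-Silver tree $T'\se T$ with $F+[T']\in\Mm_-$. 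By the very definition of $\Mm_-$ we may further assume $F=\{x:(\foralmostall k)(x\rest J_k\neq x_F\rest J_k)\}$ for some $x_F$ and some partition $\{J_k:k\in\w\}$ of $\w$ into consecutive intervals; and we write $T=\{\sigma:(\forall n\in\dom(\sigma)\bez A)(\sigma(n)=x_T(n))\}$ with $A\in[\w]^\w$. I will look for $T'$ of the shape $\{\sigma:(\forall n\in\dom(\sigma)\bez A')(\sigma(n)=x_T(n))\}$ for a suitable $A'\in[A]^\w$; then $T'\se T$ is automatic, and $[T']$ consists precisely of the reals that agree with $x_T$ off $A'$.

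The crux is a recursive choice of $A'$ synchronized with $\{J_k\}$. I would choose points $p_0<p_1<\dots$ of $A$ and indices $k_0<k_1<\dots$ so that, writing $J_{k_l}=[c_l,d_l]$, one has $p_l<c_l\leq d_l<p_{l+1}$ and $k_l\geq l$: pick $p_0\in A$ arbitrarily, let $k_0$ be least with $\min J_{k_0}>p_0$ (possible since $\min J_k\to\infty$), and given $p_l,k_l$ let $p_{l+1}$ be the least element of $A$ above $\max J_{k_l}$ (it exists, $A$ being infinite) and let $k_{l+1}$ be the least $k>k_l$ with $\min J_k>p_{l+1}$. Put $A'=\{p_l:l\in\w\}\in[A]^\w$; since the $p_l$ increase and $p_l<c_l\leq d_l<p_{l+1}$, each $J_{k_l}$ is disjoint from $A'$. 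Next define a partition of $\w$ into intervals by $I'_0=[0,d_0]$ and $I'_m=[d_{m-1}+1,d_m]$ for $m\geq1$; because $d_{m-1}<c_m\leq d_m$ this is indeed a partition and $J_{k_m}\se I'_m$ for every $m$. Finally set $z=x_F+x_T$.

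Then I claim $F+[T']\se\{w:(\foralmostall m)(w\rest I'_m\neq z\rest I'_m)\}$, and the set on the right belongs to $\Mm_-$ by definition. Indeed, take $w=y+x$ with $y\in F$ and $x\in[T']$; fix $N$ with $y\rest J_k\neq x_F\rest J_k$ for all $k\geq N$, and then $M$ with $k_m\geq N$ for all $m\geq M$ (which exists because $k_m\geq m$). For $m\geq M$ we have $J_{k_m}\cap A'=\0$, so $x\rest J_{k_m}=x_T\rest J_{k_m}$, hence $w\rest J_{k_m}=y\rest J_{k_m}+x_T\rest J_{k_m}\neq x_F\rest J_{k_m}+x_T\rest J_{k_m}=z\rest J_{k_m}$, and therefore $w\rest I'_m\neq z\rest I'_m$ since $J_{k_m}\se I'_m$. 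This gives $F+[T']\in\Mm_-$, hence, by the reduction above, $F+\underbrace{[T']+\dots+[T']}_{n}\in\Mm_-$ for every $n$.

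The only genuinely delicate point is the recursive construction in the second paragraph: $A'$ has to be threaded through $A$ sparsely enough that between consecutive elements of $A'$ there always sits a complete block $J_{k_l}$ of the partition witnessing $F\in\Mm_-$, with $k_l\to\infty$, so that no matter which $y\in F$ produced the sum-point $w$, the coordinates of $w$ on these blocks eventually disagree with $z$. Once this synchronization is arranged, the translation-invariance reduction and the final verification are routine bookkeeping.
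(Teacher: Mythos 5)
Your proof is correct, and it is essentially the argument the paper has in mind: the paper only cites the Silver-tree case of \cite{MiRalZebAddCant}, whose adaptation to $\ZZ^\w$ is exactly your combination of the Remark ($[T']+[T']=[T']+x_{T'}$, so the $n$-fold sum reduces by translation invariance to $F+[T']$) with a thinning of the free-coordinate set $A$ so that infinitely many blocks of the partition witnessing $F\in\Mm_-$ are missed. Your synchronization of $A'$ with the intervals $J_{k_l}$ and the final verification are sound, so nothing is missing.
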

    \begin{proof}
      Almost identical to the proof of \cite[Theorem 5]{MiRalZebAddCant}.
    \end{proof}
    
    The case of perfect trees and meager sets is more nuanced.
    
    \begin{theorem}\label{Meager 1 Sacks}
      For every $F\in \Mm$ and every (uniformly) perfect tree $T\se \ZZ^{<\w}$ there is a (uniformly) perfect tree $T'\se T$ such that
      \[
        F+[T']\in \Mm.
      \]
    \end{theorem}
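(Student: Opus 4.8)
The plan is to rely on the characterization of meager sets from Lemma~\ref{charakteryzacja M} (stated there for $\w^{<\w}$, but valid verbatim for $\ZZ^{<\w}$). Fix $f\colon\ZZ^{<\w}\to\ZZ^{<\w}$ with $F\se M:=\{x\in\ZZ^\w:(\foralmostall\sigma\in\ZZ^{<\w})(\sigma\cons f(\sigma)\not\se x)\}$ and $M\in\Mm$; as $F+[T']\se M+[T']$, it suffices to produce $T'$ with $M+[T']\in\Mm$. Since the only $\sigma$ that can satisfy $\sigma\cons f(\sigma)\se a$ are the initial segments of $a$, membership in $M$ takes the form
\[
  a\notin M\iff a\rest[n,n+|f(a\rest n)|)=f(a\rest n)\ \text{ for infinitely many }n\in\w ;
\]
such an $n$ I will call a \emph{hit} of $f$ on $a$.

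The second ingredient is the remark that if $[T']$ is compact then $M+[T']$ is $F_\sigma$: writing $M=\bigcup_nD_n$ with $D_n$ closed, each $D_n+[T']$ is a sum of a closed set and a compact set, hence closed. An $F_\sigma$ set with empty interior is meager (each of its closed pieces then has empty interior, so is nowhere dense), so it is enough to construct a (uniformly) perfect $T'\se T$ with $[T']$ compact such that $M+[T']$ has empty interior; equivalently, for every $\tau\in\ZZ^{<\w}$ there should be $x\es\tau$ with $x-y\notin M$ for all $y\in[T']$.

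I would build $T'$ by a fusion, keeping exactly two successors at each splitting node (so $[T']$ is compact, hence nowhere dense); in the uniformly perfect case the splittings are placed on split levels of $T$, which is legitimate since $T$ is (uniformly) perfect. Alongside, for a fixed enumeration $\{\tau_k:k\in\w\}$ of $\ZZ^{<\w}$, I would build witnesses $x_k\es\tau_k$, where $x_k$ is introduced at stage $k$ (after extending the common part past $|\tau_k|$ and letting $x_k$ agree with $\tau_k$ on an initial segment). At stage $n$, after a finite extension of everything built so far, I process the finitely many pairs $(k,s)$ with $k\le n$ and $s$ a leaf of the current finite tree; for such a pair, let $p$ be the current common length and $t$ the common restriction to level $p$ of the (still undivided) branches through $s$, put $w=(x_k\rest p)-t$, extend the path of $T'$ above $s$ on $[p,p+|f(w)|)$ by any string $Q$ legal in $T$, and set $x_k\rest[p,p+|f(w)|)=f(w)+Q$. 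Then $p$ is a hit of $f$ on $x_k-y$ for every branch $y$ of $T'$ through $s$, because $(x_k-y)\rest p=w$ while $(x_k-y)\rest[p,p+|f(w)|)=(f(w)+Q)-Q=f(w)$. The remaining paths are extended (legally in $T$) and the remaining $x_j$ extended arbitrarily to level $p+|f(w)|$, and one passes to the next pair; when all pairs are done, every leaf is split (and, in the plain case, the resulting leaves re-aligned to a common level) and stage $n+1$ begins.

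The verification is then short: a hit created for $(k,s)$ at position $p$ involves only coordinates $<p+|f(w)|$ of $x_k$ and of the branch, which are frozen once the pair is processed, and all branches of $T'$ through $s$ still agree up to level $p+|f(w)|$; hence the hit survives all later stages. Consequently, for every $k$, every $n\ge k$, and every $y\in[T']$ — passing through a unique leaf $s$ of the stage-$n$ tree — there is a hit of $f$ on $x_k-y$, and for distinct $n$ these hits lie in pairwise disjoint intervals; thus $x_k-y$ has infinitely many hits, i.e. $x_k-y\notin M$, so $x_k\notin M+[T']$ and $x_k\es\tau_k$. Therefore $M+[T']$ has empty interior and, being $F_\sigma$, is meager, whence $F+[T']\in\Mm$. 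The main difficulty is the bookkeeping of the fusion — guaranteeing that every $x_k$ receives a hit against \emph{all} branches at \emph{every} stage $\ge k$ while keeping $T'\se T$ with finite branching — together with the routine but somewhat fussy tree surgery needed to carry out the splittings (and, in the uniformly perfect case, to keep them on split levels of $T$).
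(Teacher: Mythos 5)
Your argument is correct, but it follows a genuinely different route from the paper's. The paper also starts from Lemma~\ref{charakteryzacja M}, but then, alongside the binary subtree $(\tau_\rho:\rho\in 2^{<\w})$, it constructs an explicit function $h\colon\ZZ^{<\w}\to\ZZ^{<\w}$ by chaining translated $f$-values over all $2^n$ level-$n$ nodes (the auxiliary strings $\sigma^n_k$, with $\sigma^n_{2^n-1}=\sigma_n\cons h(\sigma_n)$), so that $F+[T']$ is contained in $H=\{x:(\foralmostall n)(\sigma_n\cons h(\sigma_n)\not\se x)\}$, a meager set of the same combinatorial shape as in the lemma. You avoid any such containment: since your $T'$ is finitely branching, $[T']$ is compact, so $M+[T']$ is $F_\sigma$ (closed plus compact is closed in the topological group $\ZZ^\w$), and an $F_\sigma$ set is meager as soon as it has empty interior; this trades ``cover the whole sum by a set of the standard form'' for ``exhibit one point of each basic clopen set outside the sum'', and your stage-by-stage diagonalization — one hit per pair (witness, leaf), using that all branches through a leaf agree beyond the processed interval and that the relevant coordinates are frozen afterwards — delivers exactly that, with noticeably lighter bookkeeping than the paper's chained definition of $h$. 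What the paper's method buys is independence from compactness and an output again in the $\foralmostall$ form of Lemma~\ref{charakteryzacja M}; what yours buys is a shorter, softer verification, at the price of being tied to finitely branching subtrees (so it could not even be attempted for Miller or Laver trees, where the paper shows the phenomenon fails anyway) and to the group-topological fact about compact sets. Both proofs handle the uniformly perfect case identically, by placing the new splittings of $T'$ on split levels of $T$, which are full levels by uniform perfectness.
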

    \begin{proof}
      Let $T$ be a perfect tree (the proof for a uniformly perfect tree is almost identical). Let $\{\sigma_n:\, n\in\omega\}=\ZZ^{<\w}$ and assume that if $\sigma_n \se \sigma_m$, then $n\leq m$. Let $F$ be meager with $f$ as in Lemma \ref{charakteryzacja M}, i.e.
      \[
        F\se\{x\in\ZZ^\w:\, (\foralmostall n \in \omega)({\sigma_n}\cons f(\sigma_n)\not\se x)\}.
      \]
      For every $n\in \w$ let the enumeration $\{\rho^n_k:\, k<2^n\}=2^n$ be in lexicographical order, i.e. $\rho^{n+1}_{2k}={\rho^n_k}\cons 0$ and $\rho^{n+1}_{2k+1}={\rho^n_k}\cons 1$.
      We will construct inductively $(\tau_{\rho}:\, \rho\in 2^{<\w})$, $(\sigma^n_{k}:\, n\in \omega, k<2^n)$,  such that for each $n\in\omega$
      \begin{enumerate}[(i)]
        \item for each $\rho\in 2^n$ $\tau_\rho\in \splitt(T)$ and for $i=0,1$ $\tau_\rho\se{\tau}_{\rho\cons i}'\se\tau_{\rho\cons i}$ and ${\tau}'_{\rho\cons 0}\perp {\tau}'_{\rho\cons 1}$;
        \item ${\sigma_n}\cons \underbrace{00\dots 0}_{n-times}=\sigma^{n}_{-1}\se {\sigma^{n}_k}\se \sigma^n_{k+1}$ for $k<2^{n}-1$;
        \item $|\tau'_{\rho^n_k}| = |(\sigma^n_{k-1} - \tau'_{\rho^n_k}\rest |\sigma^n_{k-1}|)\cons f(\sigma^n_{k-1} - \tau'_{\rho^n_k}\rest |\sigma^n_{k-1}|)|$;
        \item if $(\forall m\ge n)({\sigma_m}\cons f(\sigma_m)\not\se x)$, then $\sigma^n_k \not\se x+\tau'_{\rho^n_k}$.
      \end{enumerate}
      
      Let $|\tau'_\0|=|f(\0)|$ and let $\tau_{\0}$ be the shortest splitting extension of $\tau'_\0$ from $T$. Set $\sigma^0_0=f(\0)+\tau'_\0$.
      
      Let us consider the step $n+1>0$. Set $T\ni\tau'_{{\rho^n_0} \cons 0}\es {\tau_{\rho^n_0}}\cons i^0_0$ long enough so that the following hold  
      \[
        |\tau'_{{\rho^n_0} \cons 0}| = |({\sigma^{n+1}_{-1}-\tau'_{{\rho^n_0} \cons 0}\rest |\sigma^{n+1}_{-1}|})\cons f(\sigma^{n+1}_{-1}-\tau'_{{\rho^n_0} \cons 0}\rest |\sigma^{n+1}_{-1}|)|.
      \]
      Denote $\sigma^{n+1}_{0}=({\sigma^{n+1}_{-1}-\tau'_{{\rho^n_0} \cons 0}\rest |\sigma^{n+1}_{-1}|})\cons f(\sigma^{n+1}_{-1}-\tau'_{{\rho^n_0} \cons 0}\rest |\sigma^{n+1}_{-1})+\tau'_{{\rho^n_0}\cons 0}$. In a similar fashion, set $T\ni\tau'_{{\rho^n_0} \cons 1}\es {\tau_{\rho^n_0}}\cons i^0_1$, $i^0_1\neq i^0_0$, such that
      \[
        |\tau'_{{\rho^n_0} \cons 1}| = |({\sigma^{n+1}_0-\tau'_{{\rho^n_0} \cons 1}\rest |\sigma^{n+1}_0|})\cons f(\sigma^{n+1}_0-\tau'_{{\rho^n_0} \cons 1}\rest |\sigma^{n+1}_0|)|
      \]
      and denote $\sigma^{n+1}_{1}=({\sigma^{n+1}_0-\tau'_{{\rho^n_0} \cons 1}\rest |\sigma^{n+1}_0|})\cons f(\sigma^{n+1}_0-\tau'_{{\rho^n_0} \cons 1}\rest |\sigma^{n+1}_0)+\tau'_{{\rho^n_0}\cons 1}$. Notice that $\sigma_{n+1}\se \sigma^{n+1}_{-1}\se \sigma^{n+1}_{0}\se \sigma^{n+1}_{1}$. At the step $k<2^{n}$ assume that $\sigma^{n+1}_{2j}$ and $\sigma^{n+1}_{2j+1}$ for $j<k$ are defined. Set $\tau'_{{\rho^n_k} \cons 0}\es {\tau_{\rho^n_k}}\cons i^k_0$ and $\tau'_{{\rho^n_k} \cons 1}\es {\tau_{\rho^n_k}}\cons i^k_1$, $i^k_0\neq i^k_1$, from $T$ such that
      \begin{align*}
        |\tau'_{{\rho^n_k} \cons 0}| = |({\sigma^{n+1}_{2k-1}-\tau'_{{\rho^n_k} \cons 0}\rest |\sigma^{n+1}_{2k-1}|})\cons f(\sigma^{n+1}_{2k-1}-\tau'_{{\rho^n_k} \cons 0}\rest |\sigma^{n+1}_{2k-1}|)|,
        \\
        |\tau'_{{\rho^n_k} \cons 1}| = |({\sigma^{n+1}_{2k}-\tau'_{{\rho^n_k} \cons 1}\rest |\sigma^{n+1}_{2k}|})\cons f(\sigma^{n+1}_{2k}-\tau'_{{\rho^n_k} \cons 1}\rest |\sigma^{n+1}_{2k}|)|,
      \end{align*}
      where, predictably,
      \[
        \sigma^{n+1}_{2k}=({\sigma^{n+1}_{2k-1}-\tau'_{{\rho^n_{k}} \cons 0}\rest |\sigma^{n+1}_{2k-1}|})\cons f(\sigma^{n+1}_{2k-1}-\tau'_{{\rho^n_{k}} \cons 0}\rest |\sigma^{n+1}_{2k-1})+\tau'_{{\rho^n_{k}}\cons 0}.
      \]      
      Also, set
      \[
        \sigma^{n+1}_{2k+1}=({\sigma^{n+1}_{2k}-\tau'_{{\rho^n_{k}} \cons 1}\rest |\sigma^{n+1}_{2k}|})\cons f(\sigma^{n+1}_{2k}-\tau'_{{\rho^n_{k}} \cons 1}\rest |\sigma^{n+1}_{2k})+\tau'_{{\rho^n_{k}}\cons 1}.
      \]
      
      Observe that $\sigma^{n+1}_{2k-1}\se \sigma^{n+1}_{2k}\se\sigma^{n+1}_{2k+1}$. Finally set $\tau_{\rho\cons i}$, $\rho\in 2^n, i=0,1,$ to be the shortest extensions of $\tau'_{\rho\cons i}$ to splitting nodes of $T$. The construction is complete.
      
      Clearly (ii) is the case. Conditions (i) and (iii) are also satisfied.
      
      To see (iv) let $x\in \ZZ^\w$ be such that ${\sigma_m}\cons f(\sigma_m)\not\se x$ for $m\ge n+1$ for some $n$. Let $k<2^{n+1}$. Then $\sigma^{n+1}_k=(\sigma^{n+1}_{k-1}-\tau'_{\rho^{n+1}_k}\rest |\sigma^{n+1}_{k-1}|)\cons f(\sigma^{n+1}_{k-1}-\tau'_{\rho^{n+1}_k}\rest |\sigma^{n+1}_{k-1}|)+\tau'_{\rho^{n+1}_k}$ and clearly $|(\sigma^{n+1}_{k-1}-\tau'_{\rho^{n+1}_k}\rest |\sigma^{n+1}_{k-1}|)|> n+1$. Hence $(\sigma^{n+1}_{k-1}-\tau'_{\rho^{n+1}_k}\rest |\sigma^{n+1}_{k-1}|)\cons f(\sigma^{n+1}_{k-1}-\tau'_{\rho^{n+1}_k}\rest |\sigma^{n+1}_{k-1}|)\not\se x$, so $(\sigma^{n+1}_{k-1}-\tau'_{\rho^{n+1}_k}\rest |\sigma^{n+1}_{k-1}|)\cons f(\sigma^{n+1}_{k-1}-\tau'_{\rho^{n+1}_k}\rest |\sigma^{n+1}_{k-1}|)+\tau'_{\rho^{n+1}_{k}}\not\se x+\tau'_{\rho^{n+1}_{k}}$.

      For every $n\in\omega$ let $h(\sigma_n)$ be such that $\sigma^{n}_{2^n-1}={\sigma_n}\cons h(\sigma_n)$. The function $h$ is well defined thanks to (ii).
      
      Set 
      \begin{align*}
        T'&=\{\tau\in T:\, (\exists \rho\in 2^{<\w})(\tau\se \tau_{\rho})\},
        \\
        H&=\{x\in \ZZ^{\w}:\, (\foralmostall n \in \omega)({\sigma_n}\cons h(\sigma_n)\not\se x)\}.
      \end{align*}
      We will show that $F+[T']\se H$. Let $x\in F$ and $t\in [T']$. Let $N\in \w$ be such that ${\sigma_n}\cons f(\sigma_n)\not\se x$ for $n\ge N$. Fix such $n$. Let $k<2^n$ be such that $\tau'_{\rho^n_k}\se t$. By (iv) $\sigma^n_k\not\se x+\tau'_{\rho^n_k}$. Notice that $\sigma^n_k\se \sigma^n_{2^n-1}={\sigma_n}\cons h(\sigma_n)$. Hence,  ${\sigma_n}\cons h(\sigma_n)\not\se x+t$.
    \end{proof}
    
    \begin{theorem}\label{Sacks fuzja}
      For every $F\in \Mm$ and every (uniformly) perfect tree $T\se \ZZ^{<\w}$ there is a (uniformly) perfect tree $T'\se T$ such that for each $n$
      \[
        F+\underbrace{[T']+[T']+\dots +[T']}_{n-times}\in \Mm.
      \]
    \end{theorem}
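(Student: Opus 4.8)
The plan is to iterate Theorem~\ref{Meager 1 Sacks} along a fusion sequence. Recall that if $T=T_0\supseteq T_1\supseteq T_2\supseteq\dots$ is a decreasing sequence of perfect trees such that for each $k$ the tree $T_{k+1}$ agrees with $T_k$ up to and including its $k$-th splitting level (i.e.\ the $k$-th splitting nodes of $T_{k+1}$ are exactly those of $T_k$), then $T'=\bigcap_k T_k$ is again a perfect tree and $[T']\se[T_k]$ for every $k$; in the uniformly perfect case, if moreover all the $T_k$ are uniformly perfect and agree on an initial segment of levels growing with $k$, then $T'$ is uniformly perfect too. Writing $G_k=F+\underbrace{[T_k]+\dots+[T_k]}_{k\ti{--times}}$ (so $G_0=F$), the inclusion $[T']\se[T_k]$ yields $F+\underbrace{[T']+\dots+[T']}_{k\ti{--times}}\se G_k$, so it is enough to produce the $T_k$ so that every $G_k$ is meager.

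The first ingredient is a localized form of Theorem~\ref{Meager 1 Sacks}: for every meager $G\se\ZZ^\w$, every (uniformly) perfect tree $S$ and every $\sigma\in S$ there is a (uniformly) perfect $R\se S$ containing every initial segment of $\sigma$ with $G+[R]\in\Mm$. This is obtained by applying Theorem~\ref{Meager 1 Sacks} to the (uniformly) perfect subtree $S\restricted\sigma=\{\tau\in S:\tau\se\sigma\lor\sigma\se\tau\}$: in the construction proving Theorem~\ref{Meager 1 Sacks} the output tree is the downward closure of a family $(\tau_\rho)_{\rho\in2^{<\w}}$ all extending one common node $\tau_\0$, and since $S\restricted\sigma$ has no splitting strictly below $\sigma$ we get $\tau_\0\es\sigma$, so all initial segments of $\sigma$ lie in $R$.

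Now carry out the recursion. Put $T_0=T$, and given $T_k$ perfect with $G_k\in\Mm$, enumerate the $k$-th splitting nodes of $T_k$ as $\{s_j:j\in\w\}$ and pick for each $j$ distinct immediate successors $s_j\cons a_j,s_j\cons b_j\in T_k$. Applying the localized statement to $G_k$, $T_k$ and each node $s_j\cons a_j$ and $s_j\cons b_j$ produces (uniformly) perfect $R_j^a,R_j^b\se T_k$, containing all initial segments of $s_j\cons a_j$ (resp.\ of $s_j\cons b_j$), with $G_k+[R_j^a],\,G_k+[R_j^b]\in\Mm$. Set $T_{k+1}=\bigcup_j(R_j^a\cup R_j^b)$; this is a perfect subtree of $T_k$ in which every $s_j$ still splits, whose $k$-th splitting level equals that of $T_k$, and with $[T_{k+1}]=\bigcup_j([R_j^a]\cup[R_j^b])$, so that
\[
  G_k+[T_{k+1}]=\bigcup_j\big((G_k+[R_j^a])\cup(G_k+[R_j^b])\big)\in\Mm
\]
as a countable union of meager sets. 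Since $[T_{k+1}]\se[T_k]$,
\[
  G_{k+1}=F+\underbrace{[T_{k+1}]+\dots+[T_{k+1}]}_{(k+1)\ti{--times}}\se\Big(F+\underbrace{[T_k]+\dots+[T_k]}_{k\ti{--times}}\Big)+[T_{k+1}]=G_k+[T_{k+1}]\in\Mm,
\]
so $G_{k+1}\in\Mm$ and the recursion continues. With $T'=\bigcap_k T_k$ the remark above gives $T'$ (uniformly) perfect, and $F+\underbrace{[T']+\dots+[T']}_{n\ti{--times}}\se G_n\in\Mm$ for every $n$, as wanted.

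I expect the only real work to be the bookkeeping of splitting that makes the fusion converge to a (uniformly) perfect tree: one must check that gluing the $R_j^a,R_j^b$ over all $j$ neither creates nor destroys the splitting nodes of $T_k$ below level $k$, and, for uniformly perfect trees, that the countably many trees produced at stage $k$ can be arranged to split along one common increasing sequence of levels (so that their union is again uniformly perfect and the limit $T'$ inherits this). The latter is handled by running the countably many invocations of (the uniformly perfect version of) Theorem~\ref{Meager 1 Sacks} in lockstep, at each finite stage committing to a common next splitting level large enough for the finitely many length requirements active so far; this is routine but is where the care is needed.
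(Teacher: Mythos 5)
Your proposal is correct and follows essentially the same route as the paper: iterate Theorem~\ref{Meager 1 Sacks} along a fusion sequence $T_{n+1}\preceq_n T_n$ (preserving the $n$-th splitting level) and take $T'=\bigcap_n T_n$, with the $n$-fold sum over $[T']$ absorbed into the meager set produced at stage $n$. Your localization of Theorem~\ref{Meager 1 Sacks} below the successors of the $n$-th splitting nodes and the countable-union argument are exactly the details hidden in the paper's phrase ``using repeatedly Theorem~\ref{Meager 1 Sacks}''.
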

    \begin{proof}
      Let
      \begin{align*}
        \level(T,0)=&\{\stem(T)\},
        \\
        \level(T,n+1)=&\{\tau\in\splitt(T):\, (\exists \sigma\in \level(T,n))(\sigma\sen \tau \land
        \\
        &(\forall \eta\in T)(\sigma\sen \eta\sen \tau \to \eta\notin\splitt(T)))\}.
      \end{align*}
      For $n\in\w$ let $P\preceq_n Q$ if $P\se Q$ and $\level(P,n)=\level(Q,n)$.
      
      Using repeatedly Theorem \ref{Meager 1 Sacks} we may find for any meager set $F$ and (uniformly) perfect tree $T$ a fusion sequence of trees $(T_n:\, n\in\omega)$, i.e. 
      \[
        T_0=T,\quad F_0=F,\quad F_{n+1}=F_n+[T_{n+1}],\quad T_{n+1}\preceq_{n} T_n.
      \]
      
      Then $[T']=\bigcap_{n\in\omega}[T_n]$ is a body of the desired (uniformly) perfect tree.
    \end{proof}
    
    The answer in the case of Miller trees and meager sets is far from positive.
    
    \begin{example}
      There is an $\nwd_-$ set $F$  and a Miller tree $T$ such that for any Miller tree $T'\se T$
      \[
        F+[T']\notin\Mm.
      \]
    \end{example}
    \begin{proof}
      Let
      \begin{align*}
        F=\{x\in \ZZ^\w:\, (\forall n)(x(n)\neq 0)\},
      \end{align*}
      Fix a bijection $\alpha: \, \ZZ^{<\w}\to \ZZ$ and let $\hat{\alpha}:\, \ZZ^{<\w}\to \ZZ^{<\w}$ be given by
      \begin{align*}
        \hat{\alpha}(\0)&=\0,
        \\
        \hat{\alpha}(\sigma\cons i)&=\hat{\alpha}(\sigma)\cons \alpha(\sigma\cons i).
      \end{align*}
      Set $T= \rm{rng}(\hat{\alpha})$. Clearly $T$ is a Miller tree. Let $T'\se T$ be a Miller tree.    
      
      We will show that for every $H\in\Mm$ there are $x\in F, t\in [T']$ such that $x+t\notin H$.
      
      Fix $h:\ZZ^{<\w} \to \ZZ^{<\w}$ and set
      \[
        H=\{y:\, (\foralmostall \sigma)(y\not\es \sigma\cons h(\sigma))\}.
      \]
      We will find $x\in F$ and $t\in [T']$ such that $x+t\es \sigma\cons h(\sigma)$ for infinitely many $\sigma$. For this purpose let us construct $(\tau_n:\, n\in\w)$, $(\tau'_n:\, n\in\w)$, $(\xi_n:\, n\in\w), (\sigma_n:\, n\in\w)$ such that
      \begin{enumerate}[(i)]
        \item $|\sigma_{n+1}|>|\sigma_n|$;
        \item $\tau_n\se \tau'_n\se \tau_{n+1}$, $\tau'_n\in \wsplit(T')$;
        \item $\xi_n\se \xi_{n+1}$, $|\xi_n|=|\tau_n|$, $\xi_n(k)\neq 0$ for $k\in\dom(\xi_n)$;
        \item $\xi_n+\tau_n\es{\sigma_n}\cons h(\sigma_n)$.
      \end{enumerate}
      
      Let $|\sigma_0|=\stem(T')$ and $\sigma_0(k)\neq \stem(T')(k)$ for $k<|\stem(T')|$. Let $\tau_0\in T'$ be such that $\tau_0(k)\neq ({\sigma_0}\cons h(\sigma_0))(k)$ for $k<|{\sigma_0}\cons h(\sigma_0)|$. Set $\xi_0={\sigma_0}\cons h(\sigma_0)-\tau_0$. Let $\tau'_0\es \tau_0$ with $\tau'_0\in\wsplit(T')$.
      
      At the step $n+1$ let $\sigma_{n+1}\es {\sigma_{n}}\cons h(\sigma_n)$, $|\sigma_{n+1}|=\tau'_{n}$ and 
      \[
        (\forall k\in \dom(\sigma_{n+1})\bez \dom({\sigma_n}\cons h(\sigma_n)))(\sigma_{n+1}(k)\neq\tau_n'(k)).
      \]
      Set $\tau_{n+1}\es \tau'_{n}$ from $T'$ such that
      \[
      (\forall k\in\dom({\sigma_{n+1}}\cons h(\sigma_{n+1})\bez \dom(\tau'_n))(\tau_{n+1}(k)\neq ({\sigma_{n+1}}\cons h(\sigma_{n+1}))(k)).
      \]
      Finally set $\tau_{n+1}'\in\wsplit(T'), \tau'_{n+1}\es \tau_{n+1}$. The construction is complete.
      
      Let $x=\bigcup_{n\in\w}\xi_n$ and $t=\bigcup_{n\in\w}\tau_n$. Clearly $x\in F$ and $t\in [T']$. Furthermore by (iv) $x+t\es {\sigma_n}\cons h(\sigma_n)$, hence $x+t\notin H$.
    \end{proof}
    
    Replacing the Miller tree with $\w-$Silver tree does not help much.
    
    \begin{example}
      There is a nowhere dense set $F$ such that for each $\w-$Silver tree $T$ we have $F+[T]\notin \Mm$.
    \end{example}
    \begin{proof}
      Without loss of generality we may assume that $x_T=(0,0,\dots)$, i.e. $[T]=\{x\in \ZZ^\w:\, (\forall n\notin A)(x(n)=0)\}$. Let $F=\{x\in \ZZ^{\w}:\, (\forall n\in\w)(x\not\es {\sigma_n}\cons \underbrace{0\dots 0}_{n-times})\}$, where $(\sigma_n:\, n\in\w)$ is an enumeration of $\ZZ^{<\w}$ such that $n<m$ for $\sigma_n\se \sigma_m$. Let $H$ be any meager set associated with a function $h$. We will construct $(\tau_n:\, n\in \w)\in T^\w$ and $(\rho_n:\,n\in\w)\in (\ZZ^{<\w})^\w$ such that for all $n\in\w$
      \begin{enumerate}[(i)]
        \item $\tau_n\sen \tau_{n+1}, \rho_n\sen \rho_{n+1}$;
        \item $(\forall k\in\w)(\rho_n\not\es {\sigma_k}\cons \underbrace{0\dots 0}_{k-times})$;
        \item $\tau_n(k)=0$ for $k\in \dom(\tau_n)\bez A$;
        \item ${(\tau_n+\rho_n)}\cons h(\tau_n+\rho_n)\se \tau_{n+1}+\rho_{n+1}$.
      \end{enumerate}
      Let $\rho_0'=\underbrace{1\dots 1}_{\min A}$, $\tau_0'=\underbrace{0\dots 0}_{\min A}$ and $\rho_0={\rho_0'}\cons l_0=\sigma_m$, where $m>|h({\rho_0'}\cons 1)|$, $\tau_0={\tau_0'}\cons (1-l_0)$. Let $\rho_1'={\rho_0}\cons h({\rho_0'}\cons 1)\cons 1\dots 1$ such that $|\rho_1'|\in A$ and $\tau_1'={\tau_0'}\cons 0\dots 0$ with $|\tau_1'|=|\rho_1'|$. Set $\rho_1={\rho_1'}\cons l_1=\sigma_m$, where $m>|h((\tau_1'+\rho_1')\cons 1)|$, and $\tau_1={\tau_1'}\cons (1-l_1)$.
      
      Let us execute the step $n+1$. Let $\rho_{n+1}'={\rho_n} \cons h(\rho_n+\tau_n)\cons 1\dots 1$ and $\tau_{n+1}'={\tau_n}\cons 0\dots \cons 0$ such that $|\rho_{n+1}'|=|\tau_{n+1}'|\in A$. Set $\rho_{n+1}={\rho_{n+1}'}\cons l_{n+1}=\sigma_m$, where $m>|h((\tau_{n+1}'+\rho_{n+1}')\cons 1)|$, and $\tau_{n+1}={\tau_{n+1}'}\cons (1-l_{n+1})$. The construction is complete.
      
      Set $t=\bigcup_{n\in\w}\tau_n$ and $x=\bigcup_{n\in\w}\rho_n$. By (ii) $x\in F$, by (iii) $t\in [T]$, and by (iv) $x+t\notin H$.
    \end{proof}
    
  \section{Fake null}
    
    It is known that there is no translation invariant regular measure on $\ZZ^\w$, as the latter is not locally compact. It does not mean however, that one cannot define a reasonable translation invariant $\sigma-$ideal resembling null sets in the Cantor space.
    \begin{definition}
      We will say that a set $A$ is fake null, denote by $A\in\Nn$, if
      \[
        (\forall \varepsilon>0)(\exists (\sigma_n:\, n\in\w))\left(\sum_{n\in\w} \frac{1}{2^{|\sigma_n|}}<\varepsilon\,\&\,A\se \bigcup_{n\in\w}[\sigma_n]\right).
      \]
    \end{definition}
    
    Clearly it is a translation invariant $\sigma-$ideal. Moreover it is orthogonal to $\Mm$, i.e. there is a comeager set $G\in\Nn$. There is a compact set which is not fake null, e.g. body of any full binary tree. Also, the characterization from \cite[Lemma 2.5.1]{BarJu} works, namely
    \begin{lemma}\label{characterization of fake null}
      Let $F\in \Nn$. Then there is a sequence $(S_n:\, n\in\w)$, $S_n\se \ZZ^n$ for each $n\in\w$, such that $\sum_{n\in\w}\frac{|S_n|}{2^n}<\infty$ and
      \[
        F\se \{x\in \ZZ^\w:\, (\existinfty n\in\w)(x\rest n\in S_n)\}.
      \] 
      Conversely, if $(S_n:\, n\in\omega)$, $S_n\se \ZZ^n$, satisfy $\sum_{n\in\w}\frac{|S_n|}{2^n}<\infty$, then
      \[
        \{x\in\ZZ^\w:\, (\existinfty n\in\w)(x\rest n\in S_n)\}\in\Nn.
      \] 
    \end{lemma}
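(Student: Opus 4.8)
The plan is to prove the two implications separately. The first — that every $F\in\Nn$ admits such a sequence $(S_n)$ — is the substantive one, and its whole point is a single observation about the quantitative bound built into the definition of $\Nn$; the converse is a routine tail-of-a-convergent-series argument.

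For the forward direction, let $F\in\Nn$. For each $k\in\w$ I would apply the definition with $\varepsilon=2^{-k}$ to obtain a sequence $(\sigma^k_j:\,j\in\w)$ with $\sum_{j\in\w}2^{-|\sigma^k_j|}<2^{-k}$ and $F\se\bigcup_{j\in\w}[\sigma^k_j]$, and set $T_k=\{\sigma^k_j:\,j\in\w\}\se\ZZ^{<\w}$. The key point is that each $\sigma\in T_k$ satisfies $2^{-|\sigma|}\le\sum_{\tau\in T_k}2^{-|\tau|}<2^{-k}$, hence $|\sigma|>k$. I would then set $S_m=\bigcup_{k\in\w}(T_k\cap\ZZ^m)$ for every $m\in\w$; only the finitely many $k<m$ contribute, and $|T_k\cap\ZZ^m|<2^{m-k}$ because each of its elements contributes $2^{-m}$ to a sum below $2^{-k}$, so every $S_m$ is finite. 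For the weight I would estimate
\[
\sum_{m\in\w}\frac{|S_m|}{2^m}\le\sum_{m\in\w}\sum_{k\in\w}\frac{|T_k\cap\ZZ^m|}{2^m}=\sum_{k\in\w}\sum_{\sigma\in T_k}\frac{1}{2^{|\sigma|}}\le\sum_{k\in\w}\frac{1}{2^k}<\infty .
\]
Finally, for $x\in F$ and each $k\in\w$ there is $j$ with $\sigma^k_j\se x$, so $x\rest m\in S_m$ where $m=|\sigma^k_j|>k$; since $k$ is arbitrary, the set of such $m$ is unbounded, hence infinite, so $x$ belongs to the set on the right-hand side.

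For the converse, let $(S_n:\,n\in\w)$ with $S_n\se\ZZ^n$ satisfy $\sum_{n\in\w}|S_n|/2^n<\infty$ (so each $S_n$ is finite), and put $E=\{x\in\ZZ^\w:\,(\existinfty n)(x\rest n\in S_n)\}$. Given $\varepsilon>0$ I would pick $N$ with $\sum_{n\ge N}|S_n|/2^n<\varepsilon$; if $x\in E$ then $x\rest n\in S_n$ for some $n\ge N$, whence $E\se\bigcup_{n\ge N}\bigcup_{s\in S_n}[s]$. Enumerating $\bigcup_{n\ge N}S_n$ as $(\sigma_j:\,j\in\w)$ — in the degenerate case where this union is finite one appends further strings of large enough length that their total weight stays below $\varepsilon$ — and using that $S_n\se\ZZ^n$ forces members of distinct $S_n$ to have distinct lengths, one obtains $\sum_{j\in\w}2^{-|\sigma_j|}=\sum_{n\ge N}|S_n|/2^n<\varepsilon$, so $E\in\Nn$.

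The only genuinely delicate point is the one isolated above. The classical proof of this characterization for null sets in $2^\w$ first refines the covers so that all strings occurring in them are long, and that step has no counterpart in $\ZZ^\w$, since $\ZZ^{<\w}$ is infinitely branching. What rescues the argument is that the exponential weight already does this work: $2^{-|\sigma|}<2^{-k}$ forces $|\sigma|>k$, so the control over lengths comes for free. After that, verifying finiteness of the $S_m$ and interchanging the two sums is bookkeeping.
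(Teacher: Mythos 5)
Your proof is correct and follows essentially the same route as the paper: take covers of weight $<2^{-k}$, collect their strings by length into the sets $S_n$, use the key observation that weight below $2^{-k}$ forces length $>k$ (which gives both the finiteness of each $S_n$ and the ``infinitely often'' containment), and prove the converse by the tail-of-a-convergent-series argument. If anything, your summability estimate obtained by exchanging the two sums, $\sum_m |S_m|/2^m \le \sum_k \sum_{\sigma\in T_k} 2^{-|\sigma|} \le \sum_k 2^{-k} < \infty$, is tidier than the paper's stated per-length bound $|S_n| < \sum_{k\le n} 2^{n-k}$, which by itself only yields $|S_n|/2^n < 2$.
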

    \begin{proof}
      Let $F\se\bigcap_{n\in\w}\bigcup_{k\in\w}[\sigma^n_k]$, where $\bigcup_{k\in\w}[\sigma^n_k]\es F$ and $\sum_{k\in\w}\frac{1}{2^{|\sigma^n_k|}}<\frac{1}{2^n}$. For every $n$ set
      \[
        S_n=\{\sigma\in \ZZ^n:\, \sigma=\sigma^n_k \tn{ for some } n,k\in\w\}.
      \]
      
      Let
      \[
        F'=\{x\in \ZZ^\w:\, (\existinfty n\in\w)(x\rest n\in S_n)\}.
      \]
      
      See that $|S_n|<\sum_{k\le n}2^{n-k}$, hence $\sum_{n\in\w}\frac{|S_n|}{2^n}<\infty$.
      
      Furthermore $F\se F'$. To prove this let $x\in F$. Then there is $k_0\in\w$ such that $x\in [\sigma^0_{k_0}]$, hence $x\rest |\sigma^0_{k_0}|\in S_{|\sigma^0_{k_0}|}$. Assume that we have $0=N_0<N_1<\dots < N_n$ such that $x\rest |\sigma^{N_j}_{k_j}|\in S_{|\sigma^{N_j}_{k_j}|}$ and $|\sigma^{N_j}_{k_j}|<|\sigma^{N_{j+1}}_{k_{j+1}}|$ for $0\le j<n$ for some $n$. Set $N_{n+1}>|\sigma^{N_{n}}_{k_{n}}|$. Then there is $k_{n+1}$ such that $x\in [\sigma^{N_{n+1}}_{k_{n+1}}]$ which implies $x\rest |\sigma^{N_{n+1}}_{k_{n+1}}|\in S_{|\sigma^{N_{n+1}}_{k_{n+1}}|}$. Since $\sum_{k\in\w}\frac{1}{2^{|\sigma^{N_{n+1}}_k|}}<\frac{1}{2^{N_{n+1}}}$ it is the case that $|\sigma^{N_{n+1}}_{k_{n+1}}|>N_{n+1}>|\sigma^{N_{n}}_{k_{n}}|$. The induction is complete and it clearly results in $x\in F'$.
      
    
    To prove the second part, let $(S_n:\, n\in\omega)$, $S_n\se \ZZ^n$, satisfy $\sum_{n\in\w}\frac{|S_n|}{2^n}<\infty$. Then
      \[
        \{x\in\ZZ^\w:\, (\existinfty n\in\w)(x\rest n\in S_n)\}=\bigcap_{n\in\w}\bigcup_{k>n}\bigcup_{\sigma\in S_k}[\sigma].
      \]
      The set $\bigcup_{k>n}\bigcup_{\sigma\in S_k}[\sigma]$ is covered by basic clopen sets for which
      \[
        \sum_{k>n}\sum_{\sigma\in S_k}\frac{1}{2^{|\sigma|}}=\sum_{k>n}\sum_{\sigma\in S_k}\frac{1}{2^{k}}=\sum_{k>n}\frac{|S_k|}{2^{k}}\xrightarrow{n\to\infty} 0.
      \]
    \end{proof}
    
    We will use this characterization to prove the following results.
    
    \begin{theorem}
      For every $F\in \Nn$ and every (uniformly) perfect tree $T\se Z^{<\w}$ there is a (uniformly) perfect tree $T'\se T$ such that for each $n$
      \[
        F+\underbrace{[T']+[T']+\dots + [T']}_{n-times}\in \Nn.
      \]
    \end{theorem}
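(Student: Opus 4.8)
The crux is the single-sum case, from which the full statement follows by the fusion already used in the proof of Theorem~\ref{Sacks fuzja}. To handle it, fix $F\in\Nn$ and, by Lemma~\ref{characterization of fake null}, a sequence $(S_n:\,n\in\w)$ with $S_n\se\ZZ^n$, $\sum_n|S_n|2^{-n}<\infty$ and $F\se F^*:=\{x:\,(\existinfty n)(x\rest n\in S_n)\}$. Since the series converges, choose $m_0<m_1<\dots$ with $\sum_{n\ge m_k}|S_n|2^{-n}<4^{-k}$. Then I would carve out a (uniformly) perfect $T'\se T$ by following $T$ and allowing $T'$ to branch (into exactly two successors, chosen among those of $T$) only at a splitting node of $T$ above level $m_k$ and only once $T'$ already carries $2^k$ nodes on a level; in the uniformly perfect case the branching levels are taken inside the infinite set of uniform splitting levels of $T$, so $T'$ stays uniformly perfect. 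Writing $L_n=T'\cap\ZZ^n$, the levels on which $T'$ carries $2^{k+1}$ nodes all lie above $m_k$, so
\[
  \sum_n\frac{|S_n|\,|L_n|}{2^n}\ \le\ \sum_n\frac{|S_n|}{2^n}+\sum_k 2^{k+1}\sum_{n\ge m_k}\frac{|S_n|}{2^n}\ \le\ \sum_n\frac{|S_n|}{2^n}+\sum_k 2^{k+1}4^{-k}\ <\ \infty .
\]

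Now for $x\in F^*$, $t\in[T']$, and any $n$ with $x\rest n\in S_n$ we have $(x+t)\rest n=x\rest n+t\rest n\in S_n+L_n$, and there are infinitely many such $n$; hence $F+[T']\se F^*+[T']\se\{y:\,(\existinfty n)(y\rest n\in S_n+L_n)\}$, and since $|S_n+L_n|\le|S_n|\,|L_n|$, the displayed estimate together with the converse half of Lemma~\ref{characterization of fake null} gives $F+[T']\in\Nn$. For arbitrary $n$ I would then build a fusion sequence $T=T_0\es T_1\es\dots$ with $F_0=F$, $F_{k+1}=F_k+[T_{k+1}]$ and $T_{k+1}\preceq_k T_k$ (the relation from Theorem~\ref{Sacks fuzja}), obtaining $T_{k+1}$ from $F_k\in\Nn$ and $T_k$ by the single-sum construction performed so as to leave $\level(T_k,k)$ and everything below it fixed. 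Then $[T']:=\bigcap_k[T_k]$ is the body of a (uniformly) perfect tree, and as $[T']\se[T_j]$ for all $j$ we get $\underbrace{[T']+\dots+[T']}_{n-times}\se[T_1]+\dots+[T_n]$, so $F+\underbrace{[T']+\dots+[T']}_{n-times}\se F+[T_1]+\dots+[T_n]=F_n\in\Nn$, and downward closure of the $\sigma$-ideal $\Nn$ finishes the argument.

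The only delicate point is the compatibility of the thinning with the fusion: at stage $k$ the single-sum construction must keep the already-fixed bottom part of $T_k$ (of some bounded width, hence contributing only a finite amount to the relevant series for a witnessing sequence $(S^{(k)}_n)$ of $F_k\in\Nn$) while pushing its remaining branching levels far enough out to control the tail exactly as above. This is possible precisely because that construction constrains the branching levels only from some point onward; in the uniformly perfect case one additionally keeps all branching levels inside the infinite uniform-splitting set of $T_k$, which stays available. Everything else is routine bookkeeping.
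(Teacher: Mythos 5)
Your argument is correct, but it reaches the $n$-fold sums by a different route than the paper. Your single-sum step is essentially the paper's core idea (branch so slowly that the level-$n$ cross-section $L_n$ of $T'$ is small against the tail of $\sum_n|S_n|2^{-n}$, then absorb $L_n$ into the witnessing sets), but the paper never iterates it: it fixes in advance a non-decreasing sequence $k_n\to\infty$ with $\sum_n 2^{(k_n)^3}|S_n|2^{-n}<\infty$, builds one slowly branching $T'$ whose level-$n$ cross-section has at most $2^{k_n}$ nodes, and replaces $S_n$ by $S_n'=S_n+\bigcup_{j\le k_n}(j$-fold sums of $T'\cap\ZZ^n)$, so that $|S_n'|\le|S_n|2^{(k_n)^3}$ and the single set $F'=\{x:(\existinfty n)(x\rest n\in S_n')\}\in\Nn$ contains $F$ plus any finite number of copies of $[T']$ at once (for fixed $j$ one has $k_n\ge j$ for almost all $n$). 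This avoids fusion entirely. Your version instead delegates the multi-sum case to a Sacks-style fusion as in Theorem~\ref{Sacks fuzja}, which is legitimate but needs the extra (routine, and only sketched by you) strengthening that the single-sum construction can be carried out while preserving $\level(T_k,k)$; your justification via the bounded width of the frozen part is the right one, though you should note it uses that after the first thinning the trees are binary-branching, since a splitting node of a perfect tree in $\ZZ^{<\w}$ may have infinitely many successors and the initial splitting levels of $T$ itself could be infinite. In exchange for this bookkeeping your proof is modular and reuses machinery already present for the meager case, while the paper's one-shot absorption is shorter and gives the uniform tree with no fusion argument.
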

    \begin{proof}
      Let $F\in \Nn$ and let $T\se \ZZ^{<\w}$ be a perfect tree (the proof for uniformly perfect trees is identical). Let $S_n\se \ZZ^n$ for $n\in\omega$ such that $\sum_{n\in\w}\frac{|S_n|}{2^n}<\infty$ and $F\se \{x\in \ZZ^\w:\, (\existinfty n\in\w)(x\rest n \in S_n)\}$. Let $(k_n:\, n\in\w)$ be a non-decreasing sequence of naturals  such that $\sum_{n\in\w}2^{(k_n)^3}\frac{|S_n|}{2^n}<\infty$ and $\lim_{n\to\infty}k_n=\infty$ (see \cite[Lemma 12]{MiRalZebAddCant}). Let $m_0=0$ and $m_{n+1}=\min\{m:\, m>m_n\;\& \;k_m>k_{m_n}\}$. We may find $(\tau_\sigma: \, \sigma\in 2^{<\w})\in T^{2^{<\w}}$ such that
      \begin{enumerate}[(i)]
        \item if $\sigma\in 2^n$ then $|\tau_\sigma|\ge m_n$;
        \item $\tau_{\sigma}\se \tau_{\sigma'}$ if $\sigma\se \sigma'$;
        \item $\tau_\sigma\in\splitt(T)$ and $\tau_{\sigma\cons 0}  \perp \tau_{\sigma\cons 1}$.
      \end{enumerate}
      Set
      \begin{align*}
        T'&=\{\tau\in T: (\exists \sigma\in 2^{<\w})(\tau \se \tau_\sigma)\},
        \\
        S_n'&=S_n+\bigcup_{j=1}^{k_n}(\underbrace{(T'\cap 2^n)+(T'\cap 2^n)+\dots+(T'\cap 2^n)}_{j-times}),
        \\
        F'&=\{x\in \ZZ^\w:\, (\existinfty n\in\w)(x\rest n\in S_n')\}.
      \end{align*}
      Notice that $|S_n'|\le |S_n|\cdot \sum_{j=1}^{k_n}\prod_{i=1}^j2^{k_n}\le |S_n|\cdot 2^{(k_n)^3}$, hence $\sum_{n\in\omega}\frac{|S_n'|}{2^n}<\infty$. Clearly, for every $n\in\w$
      \[
        F+\underbrace{[T']+[T']+\dots + [T']}_{n-times}\se F'\in \Nn.
      \]
    \end{proof}
    
    The last results will be concerned with Miller trees.
    
    \begin{proposition}
      Every Miller tree $T$ contains a Miller tree $T'$ such that $[T']\in\Nn$.
    \end{proposition}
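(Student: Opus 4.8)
The plan is to build $T'\se T$ Miller so that its $\w$-splitting nodes are organized into ``generations'' $\{u_\nu:\,\nu\in\w^{<\w}\}$, with $u_\nu$ of level $\ell_\nu=|u_\nu|$, arranged so that between consecutive splits the tree runs straight up and the lengths grow fast enough that the $k$-th generation $\{u_\nu:\,\nu\in\w^k\}$, viewed as a family of basic clopen sets, has total ``mass'' $\sum_{\nu\in\w^k}2^{-\ell_\nu}\le 2^{-k}\cdot 2^{-\ell_\0}$. Since every branch of $T'$ meets exactly one node of each generation, each generation already covers $[T']$, and letting $k\to\infty$ then witnesses $[T']\in\Nn$ directly from the definition.

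The one structural fact I would record first is: in a Miller tree, above any $\sigma\in T$ and any $N\in\w$ there is a $\w$-splitting node $\tau\in T$ with $\sigma\se\tau$ and $|\tau|\ge N$. Indeed, iterating the Miller property produces a strictly $\se$-increasing chain of $\w$-splitting nodes above $\sigma$ (from $\tau_j$ pass to a successor $\tau_j\cons a$ and then extend to a $\w$-splitting node $\tau_{j+1}\sen\tau_j\cons a$), and $|\tau_j|\to\infty$. With this in hand I would recursively construct $(u_\nu:\,\nu\in\w^{<\w})$ in $T$: let $u_\0$ be any $\w$-splitting node of $T$; given $u_\nu$, fix infinitely many distinct successors $u_\nu\cons b^\nu_0,u_\nu\cons b^\nu_1,\dots\in T$, and for each $i$ choose (by the fact above) a $\w$-splitting node $u_{\nu\cons i}\in T$ with $u_\nu\cons b^\nu_i\se u_{\nu\cons i}$ and $|u_{\nu\cons i}|\ge |u_\nu|+i+2$. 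Let $T'$ be the downward closure of $\{u_\nu:\,\nu\in\w^{<\w}\}$ together with, for every $\nu$ and $i$, the unique path in $T$ from $u_\nu\cons b^\nu_i$ to $u_{\nu\cons i}$. Then $T'\se T$ is a tree, its splitting nodes are exactly the $u_\nu$ (all other nodes have a single successor in $T'$), and each $u_\nu$ has infinitely many immediate successors in $T'$, so $T'$ is Miller.

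For the verification: any $x\in[T']$ passes through an $\se$-increasing chain of splitting nodes of $T'$, which by the previous paragraph must be of the form $u_{\nu_0}\sen u_{\nu_1}\sen\dots$ with $\nu_j\in\w^j$; hence $x\in[u_{\nu_k}]$ and so $[T']\se\bigcup_{\nu\in\w^k}[u_\nu]$ for every $k$. On the other hand $\sum_{i\in\w}2^{-|u_{\nu\cons i}|}\le 2^{-|u_\nu|}\sum_{i\in\w}2^{-(i+2)}=\tfrac12\,2^{-|u_\nu|}$, so by induction on $k$ one gets $\sum_{\nu\in\w^k}2^{-|u_\nu|}\le 2^{-k}\cdot 2^{-|u_\0|}$. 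Given $\eps>0$, choose $k$ with $2^{-k-|u_\0|}<\eps$; the countable family $\{[u_\nu]:\,\nu\in\w^k\}$ then covers $[T']$ with $\sum 2^{-|u_\nu|}<\eps$, so $[T']\in\Nn$.

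The only genuinely delicate point is the first step — arranging arbitrarily long ``straight'' stretches between consecutive $\w$-splits while remaining inside the given $T$ — which is precisely what the ``arbitrarily high $\w$-splitting node'' observation secures; after that the argument is just the telescoping estimate $\sum_{i\in\w}2^{-(i+2)}=\tfrac12$ together with the remark that distinct generations all cover $[T']$.
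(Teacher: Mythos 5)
Your proof is correct, and it reaches the conclusion by a slightly more elementary route than the paper. The underlying construction is the same in spirit -- prune $T$ to a Miller subtree whose $\w$-splitting nodes are pushed so high that they generate small covers -- but the bookkeeping differs. The paper indexes its chosen splitting nodes by strictly increasing finite sequences $\sigma\in\w^{\uparrow<\w}$, only requires $|\tau_\sigma|\ge 2\max\sigma$, and then invokes the characterization of $\Nn$ (Lemma \ref{characterization of fake null}): it sets $S_{2k}=\{\tau_\sigma\rest 2k:\max\sigma=k\}$, counts $|S_{2k}|\le 2^k$, and concludes via the $\existinfty$-form of fake nullity. You instead index by all of $\w^{<\w}$, force the length of the $i$-th successor node to grow like $|u_\nu|+i+2$ so that each generation $\{u_\nu:\nu\in\w^k\}$ has total mass at most $2^{-k-|u_\0|}$ by a telescoping geometric estimate, and then verify the definition of $\Nn$ directly, with each generation serving as an $\eps$-cover; no appeal to the characterization lemma (or to any counting of index sequences) is needed. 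Both arguments rely on the same small structural fact, which you and the paper treat at the same level of detail: since $T'$ is exactly the downward closure of the selected splitting nodes, every branch of $[T']$ passes through precisely one node of each generation (this is not automatic for arbitrary Miller trees, so it is worth the one-line justification you sketch). Your variant is self-contained and arguably cleaner for this isolated statement; the paper's variant fits the $S_n$-based framework it reuses for the subsequent results about $\Nn$.
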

    
    \begin{proof}
      Let $T\se \ZZ^{<\w}$ be a Miller tree. Let $\w^{\uparrow <\w}$ denote the set of strictly increasing finite sequences. We will construct $\{\tau_{\sigma}:\, \sigma\in \w^{\uparrow <\w}\}\se T^{<\w}$, $(n_k:\, k\in\w)$ such that for each $\sigma\in \w^{\uparrow<\w}$
      \begin{enumerate}[(i)]
        \item $\tau_\sigma\in \wsplit(T)$;
        \item $\tau_\sigma\sen\tau_{\sigma\cons i}$ for $i>\max\sigma$;
        \item $\tau_{\sigma\cons i}(|\tau_\sigma|) \ne \tau_{\sigma\cons j}(|\tau_\sigma|)$ for $i\ne j$ and $i,j>\max\sigma$;
        \item $|\tau_\sigma|\ge 2k$ for $\max\sigma=k$.
      \end{enumerate}
      
      Let $\tau_\0=\stem(T)$. Assume we already have sequences $\tau_\sigma$ for $\sigma\in \w^{\uparrow <\w}$, $\max \sigma<k$ at the step $k$. For every $\sigma\in \w^{\uparrow <\w}$, $\max \sigma<k$ pick, $\tau_{\sigma\cons k}\nes \tau_\sigma$ satisfying (i) - (iv). Set
      \begin{align*}
        T'&=\{\tau\in \ZZ^{<\w}:\, (\exists \sigma\in \w^{\uparrow<\w})(\tau\se\tau_\sigma)\},
        \\
        S_{2k}&=\{\tau_\sigma\rest 2k:\, \sigma\in \w^{\uparrow<\w}, \max\sigma=k\}
      \end{align*}
      and $S_{2k+1}=\0$ for $k\in\w$. Clearly, $T'$ is a Miller tree contained in $T$. Also see that
      \[
        |S_{2k}| \le |\{\sigma\in \w^{\uparrow<\w}:\, \max\sigma=k\}| \le 2^k,
      \]
      hence $\sum_{n}\frac{|S_n|}{2^n}<\infty$. Moreover $[T']\se \{x\in \ZZ^\w:\, (\existinfty n)(x\rest n\in S_n)\}$. Indeed, if $t\in[T']$, then $t=\bigcup_{m\in\w}\tau_{y\;\rest\; m}$ for some $y\in \w^{\uparrow \w}$. Then $t\rest 2k\in S_{2k}$ for $k\in \tn{rng}(y)$.
    \end{proof}
    
    \begin{theorem}
      $[T_1]+[T_2]\notin\Nn$ for any Miller trees $T_1$, $T_2$.
    \end{theorem}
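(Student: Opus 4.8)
The plan is to pass to the characterisation of $\Nn$ in Lemma~\ref{characterization of fake null}, so that it suffices to prove: for every sequence $(S_n:\,n\in\w)$ with $S_n\se\ZZ^n$ and $\sum_{n\in\w}\frac{|S_n|}{2^n}<\infty$ there are $t_1\in[T_1]$ and $t_2\in[T_2]$ such that $(t_1+t_2)\rest n\notin S_n$ for all sufficiently large $n$. Granting this, $[T_1]+[T_2]$ is not contained in any set of the form $\{x:\,(\existinfty n)(x\rest n\in S_n)\}$, hence $[T_1]+[T_2]\notin\Nn$.

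So fix such an $(S_n)$. I would build $t_1,t_2$ by a fusion, producing lengths $\ell_0<\ell_1<\dots$ and nodes $\sigma^1_k\in T_1$, $\sigma^2_k\in T_2$ with $|\sigma^1_k|=|\sigma^2_k|=\ell_k$, $\sigma^i_k\se\sigma^i_{k+1}$, maintaining that $(\sigma^1_k+\sigma^2_k)\rest n\notin S_n$ for all $n$ with $\ell_0<n\le\ell_k$; then $t_i=\bigcup_k\sigma^i_k$ and every large $n$ is eventually covered. Since $T_1,T_2$ are Miller, at each stage I can route one of the frontiers through an $\w$-splitting node, and there the corresponding coordinate of $t_1+t_2$ may be chosen from an infinite set of integers; moreover, if the value chosen there forces a long stem afterwards, the other Miller tree must, along its branch in that stretch, pass through correspondingly many $\w$-splitting nodes, so a block $(\ell_k,\ell_{k+1}]$ can be arranged to contain as many ``free'' coordinates as desired.

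The point of the step is a weight count. Say a block offers free coordinates at positions $p_1<\dots<p_r$, each ranging over an infinite set, everything else in the block being determined by the frontiers. A partial choice of the values at $p_1,\dots,p_j$ is \emph{forbidden} if the resulting partial sum already meets some $S_n$ with $\ell_k<n\le\ell_{k+1}$; since $x\rest n$ pins down every free coordinate below $n$, the forbidden partial choices of a given length number at most $\sum_{n}|\{s\in S_n:\,s$ extends the committed prefix$\}|$ over the relevant $n$, a finite number in each block. Deleting finitely many nodes from an $\w$-branching tree leaves a full branch, which gives an admissible extension avoiding every relevant $S_n$; choosing the very first free value so that the short forced segment preceding $p_1$ is itself admissible closes the recursion.

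The technical heart — and the main obstacle — is this coordination: the block length $\ell_{k+1}$, hence which $S_n$ are relevant and how many free coordinates $r$ are needed, depends on the free choices being made, so one must fix block lengths, invoke the induction hypothesis, and carry out the count simultaneously, and verify that the forbidden partial choices never exhaust the infinitely many options available at the crucial first coordinate. It is here that $\sum_n\frac{|S_n|}{2^n}<\infty$ (and not merely the finiteness of each $S_n$) is used — the forbidden extensions carry only finite total weight, so they cannot block all the $\w$-many continuations provided by $\w$-splitting — and it is here that both trees genuinely enter, since a single Miller tree may have arbitrarily long stems and thus too few free coordinates per block.
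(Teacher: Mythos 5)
Your reduction to the characterization in Lemma \ref{characterization of fake null} and the overall frame (equal-length nodes in $T_1$ and $T_2$, blocks of coordinates, blockwise avoidance of the sets $S_n$) are exactly the paper's set-up. But the inductive step is not actually carried out, and the point you yourself call ``the main obstacle'' is a genuine gap: in your sketch the free value at an $\w$-splitting node is chosen first, and only afterwards does one learn how long the block will be (the chosen value may force a long stem), so the collection of constraints to be avoided depends on the very choice being made. Your proposed way out --- that in such a long stretch the \emph{other} Miller tree ``must pass through correspondingly many $\w$-splitting nodes'' --- is false: a branch of a Miller tree only has to meet infinitely many $\w$-splitting nodes globally, and in a prescribed finite stretch (whose location is dictated by the first tree) it may meet none. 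So the circularity is not resolved, and no recursion is actually exhibited.

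The paper dissolves the circularity by ordering the two moves the other way round and alternating between the trees. Having arranged that, say, $\tau_{2n-1}$ ends at an $\w$-splitting node of $T_2$, one \emph{first} extends $\sigma_{2n-1}$ inside $T_1$ to its next $\w$-splitting node $\sigma_{2n}$; this fixes the block $(|\sigma_{2n-1}|,|\sigma_{2n}|]$, and hence the finitely many relevant elements of the sets $S_k$, before any free choice is made. Then a single free coordinate per block suffices: choosing $\tau_{2n}(|\sigma_{2n-1}|)\in\succe_{T_2}(\tau_{2n-1})\bez\{\eta(|\sigma_{2n-1}|)-\sigma_{2n}(|\sigma_{2n-1}|):\ \eta\in S_k,\ k\in(|\sigma_{2n-1}|,|\sigma_{2n}|]\}$ already guarantees $(\sigma_{2n}+\tau_{2n})\rest k\notin S_k$ for \emph{every} $k$ in the block, however $\tau_{2n}$ is completed in $T_2$ to length $|\sigma_{2n}|$; at the next step the roles of the trees are swapped, so the tree in which the free choice is made is always sitting at an $\w$-splitting node. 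Note also that this step uses only the finiteness of each $S_k$ (which convergence of $\sum_k|S_k|/2^k$ supplies), not the convergence itself; your closing remark that the summability is what rescues the counting misplaces where that hypothesis enters --- it is needed only to get the characterization of $\Nn$, not in the avoidance argument.
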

    \begin{proof}
      Let $T_1$ and $T_2$ be Miller trees and suppose that $[T_1]+[T_2]\in \Nn$. Let $(S_n:\, n\in \w)$ witness this fact as in Lemma \ref{characterization of fake null}. We will construct sequences $(\sigma_n:\, n\in\w)\in {T_1}^\w$, $(\tau_n: \, n\in\w)\in {T_2}^\w$ such that
      \begin{enumerate}[(i)]
        \item $\sigma_{n}\sen \sigma_{n+1}$ and $\tau_{n}\sen\tau_{n+1}$ for $n\in\w$;
        \item $|\sigma_n|=|\tau_n|$ for all $n\in\w$;
        \item $(\sigma_n+\tau_n)\rest k \notin S_k$ for $k\in(|\sigma_{n-1}|, |\sigma_{n}|], n>0$.
      \end{enumerate}
      Without loss of generality assume $|\stem(T_1)|\le |\stem(T_2)|$. Let $\sigma_0=\stem(T_1)$ and $\tau_0=\stem(T_2)\rest |\sigma_0|$. Let $\tau_1\es \tau_0$, $\tau_1\in T_2$ such that $\tau_1\in \wsplit(T)$ and $\sigma_1\in T_1$, $|\sigma_1|=|\tau_1|$, such that
      \[
        \sigma_1(|\tau_0|)\in \succe_{T_1}(\sigma_0)\bez \{\eta(|\tau_0|)-\tau_1(|\tau_0|):\, \eta\in S_k\, , \, k\in (|\tau_0|, |\tau_1|]\}.
      \]
      
    Assume that in an even step $2n$ $\tau_{2n-1}\in \wsplit(T_2)$. Pick $\sigma_{2n}\nes \sigma_{2n-1}$ such that $\sigma_{2n}\in \wsplit(T_1)$ and $\tau_{2n}\es \tau_{2n-1}$ such that $|\tau_{2n}|=|\sigma_{2n}|$ and
    \[
      \tau_{2n}(|\sigma_{2n-1}|)\in \succe_{T_2}(\tau_{2n-1})\bez \{\eta(|\sigma_{2n-1}|)-\sigma_{2n}(|\sigma_{2n-1}|):\, \eta\in S_k\, , \, k\in (|\sigma_{2n-1}|, |\sigma_{2n}|]\}.
    \]
	  We proceed similarly at an odd step, just swap the role of $\sigma$ and $\tau$.
	  
	  Set $s=\bigcup_{n\in\w}\sigma_n$ and $t=\bigcup_{n\in\w}\tau_n$.Clearly $s\in [T_1], t\in [T_2]$ and by (iii) $(s+t)\rest n \notin S_n$ for $n>0$.
	  \end{proof}
	  
	  \begin{corollary}
	    There exists a fake null set $F$ such that for any Miller tree $T$ $F+[T]\notin\Nn$.
	  \end{corollary}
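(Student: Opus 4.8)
The plan is to read off the statement from the two preceding results. First I would apply the Proposition to a conveniently chosen Miller tree: the full tree $\ZZ^{<\w}$ is Miller, since every node has infinitely many immediate successors and hence lies in $\wsplit(\ZZ^{<\w})$, so the Miller condition holds trivially. The Proposition therefore supplies a Miller tree $T'\se \ZZ^{<\w}$ with $[T']\in \Nn$. I would take $F=[T']$; by construction $F$ is a fake null set.

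Next, let $T$ be an arbitrary Miller tree. Then $F+[T]=[T']+[T]$ is precisely the algebraic sum of the bodies of two Miller trees, and the Theorem ($[T_1]+[T_2]\notin\Nn$ for any Miller trees $T_1,T_2$) applies verbatim with $T_1=T'$ and $T_2=T$. Hence $F+[T]\notin\Nn$, which is what we wanted.

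There is essentially no obstacle here: all the combinatorial content has already been carried out in the Proposition (building a Miller subtree whose branches are trapped by a summable family of levels $S_n$) and in the Theorem (the diagonal back-and-forth construction of branches $s\in[T_1]$, $t\in[T_2]$ avoiding every $S_n$). The only point worth stating explicitly in the write-up is that the full tree — or indeed any fixed Miller tree — may legitimately be fed into the Proposition, so that $F$ can be chosen once and for all, independently of the later tree $T$.
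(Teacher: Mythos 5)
Your proposal is correct and is exactly the intended derivation: the paper states this corollary without proof precisely because it follows immediately by taking $F=[T']$ for a Miller tree $T'$ with $[T']\in\Nn$ (supplied by the Proposition, applied e.g.\ to the full tree $\ZZ^{<\w}$) and then invoking the Theorem that $[T_1]+[T_2]\notin\Nn$ for any Miller trees $T_1,T_2$. No gaps; your remark that $F$ is chosen once and for all, independently of $T$, is the only point that needed care and you handled it.
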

	  \begin{corollary}
	    There exists a fake null set $F$ such that for any $\w-$Silver tree $T$ $F+[T]\notin\Nn$.
	  \end{corollary}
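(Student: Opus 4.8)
The plan is to reduce this corollary to the Miller case already settled, by way of the simple observation that \emph{every $\w$-Silver tree is a Miller tree}. Granting that, the same witness does the job: by the Proposition above one can fix a Miller tree $T_0$ with $[T_0]\in\Nn$ and put $F=[T_0]$. This $F$ is fake null, and for an arbitrary $\w$-Silver tree $T$ — which is then Miller — the theorem asserting $[T_1]+[T_2]\notin\Nn$ for Miller $T_1,T_2$ gives $F+[T]=[T_0]+[T]\notin\Nn$. Equivalently, one simply quotes the preceding corollary (the one stated for Miller trees) and applies it to $T$ qua Miller tree.

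So the only thing that needs to be verified is the inclusion of $\w$-Silver trees among Miller trees, and this is a direct unwinding of the definitions. Let $T$ be $\w$-Silver with witnesses $A\in[\w]^\w$ and $x_T$. For a node $\eta\in T$ of length $\ell$, the defining condition forces $\eta(\ell)=x_T(\ell)$ when $\ell\notin A$ and imposes no constraint on the next coordinate when $\ell\in A$; hence $\eta\in\wsplit(T)$ precisely when $\ell\in A$. Now take an arbitrary $\sigma\in T$, set $\ell=\min\{k\in A:\, k\ge|\sigma|\}$ (which exists because $A$ is infinite), and let $\tau=\sigma\cons(x_T\rest[|\sigma|,\ell))$. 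Since $[|\sigma|,\ell)\cap A=\0$, each new coordinate of $\tau$ agrees with $x_T$, so $\tau\in T$; moreover $\sigma\se\tau$ and $|\tau|=\ell\in A$, so $\tau\in\wsplit(T)$. Thus every node of $T$ admits an $\w$-splitting extension, i.e.\ $T$ is Miller.

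I do not expect a genuine obstacle here: all the content is confined to the short verification above, after which the corollary is formally immediate. The only point deserving a moment's care is the degenerate case $|\sigma|\in A$, where one takes $\ell=|\sigma|$ and $\tau=\sigma$ itself; and, if one prefers a self-contained argument avoiding the Miller corollary, one can instead quote the theorem $[T_1]+[T_2]\notin\Nn$ for Miller trees together with the Proposition producing a fake-null Miller body, exactly as indicated in the first paragraph.
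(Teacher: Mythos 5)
Your argument is correct and is exactly the intended derivation: the paper states this corollary without a separate proof because it follows from the Proposition (producing a Miller tree $T_0$ with $[T_0]\in\Nn$, so $F=[T_0]$ is fake null) together with the Theorem that $[T_1]+[T_2]\notin\Nn$ for Miller trees, once one observes that every $\w$-Silver tree is itself a Miller tree --- its splitting nodes are precisely those whose length lies in $A$, where the successor set is all of $\ZZ$ --- which is the verification you carry out. No gap remains.
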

	  
	  \printbibliography

\end{document}